\theoremstyle{definition}
\theoremstyle{definition} 
\newtheorem*{maintheorem*}{Main Theorem}
\begin{document}


\title{Heyting Algebra and G\"{o}del Algebra vs. various Topological Systems and Esakia Space: a Category Theoretic Study}

\author[ A. Di Nola, R. Grigolia, P. Jana]{ A. Di Nola \corrauth, R. Grigolia, P. Jana}

\begin{abstract}
This paper introduces a notion of intuitionistic topological system. Properties of the proposed system is studied in details. Categorical interrelationships among Heyting algebra, G\"{o}del algebra, Esakia space and proposed intuitionistic topological systems have also been studied. A flavour of Kripke model is given.
\end{abstract}

\keywords{Heyting algebra, G\"{o}del algebra, Esakia space, Intuitionistic logic, Topological system}
\ams{06D20, 97H50, 03G25, 03B20, 54H10}

\maketitle
\section{Introduction}
Topological system was introduced by S. Vickers in his book ``Topology via Logic" \cite{SV} in 1989. A topological system is a triple $(X,\models, A)$, consisting of a non empty set $X$, a frame $A$ and a binary relation between the set and the frame, which matches the logic of finite observations or geometric logic. Topological system is a mathematical object which unifies the concepts of topological space and frame in one framework. Hence such a structure allows us to switch among the concepts of frame, topological space and corresponding logic freely.

Concepts of a topological system and geometric logic or logic of finite observations have a deep connection. It is well known that the Lindenbaum algebra of geometric logic is a frame, likewise the Lindenbaum algebra of classical logic is Boolean algebra and that of intuitionistic logic is Heyting algebra etc. One may notice that any topological system is a model of geometric logic. 

In \cite{SV1}, it may be noticed that ``Logically, spatiality is the same as completeness, but there is a difference of emphasis. Completeness refers to the ability of the logical reasoning (from rules and axioms) to generate all the equivalences that are valid for the models: if not, then it is the logic that is considered incomplete. Spatiality refers to the existence of enough models to discriminate between logically inequivalent formulae: if not, then the class of models is incomplete." In this respect we may recall that there exist adjunction between category of topological systems and the category of topological spaces, which leads to the concept that not every topological system comes from a topological space. To elaborate the fact one may notice that every topological space can be considered as a topological system because of the following fact: if $(X,\tau)$ is a topological space then $(X,\vdash,\tau)$ is the corresponding topological system, where $x\vdash T$ represents that $x$ is an element of $T(\in \tau$). Hence not every topological system is spatial and correspondingly we arrive at the conclusion (logical fact) that the corresponding logic (i.e., geometric logic) is not complete. On the contrary whenever we deal with a logic which is complete then we can expect categorical equivalence or duality between categories of mathematical structures which are the models of the logic.

Topological system is an important mathematical structure in its own right. It is already mentioned earlier that this kind of structure reflects the corresponding topological and algebraic structures simultaneously. In fact it is closely connected to the corresponding logic. On the other hand topological system plays important roles in computer science and (quantum) physics \cite{SV, DI}. 

It is well known that the category of Heyting algebras are dually equivalent to the category of Esakia spaces. Consequently both Heyting algebra and Esakia space are models of intuitionistic logic. Our main goal of this paper is to introduce a notion of I-topological system such that it will able to unify the notions of Heyting algebra, Esakia space and I-topological system in it. The similar study for G\"{o}del algebra and related structures is also a focus point for the present paper. It is quite expected that the proposed notions will have its impact in the areas of computer science and physics.

This paper is organised as follows. Section 2 contains the required preliminary notions to make the paper self contained. Notion of I-topological system is introduced and studied in details in Section 3. This section gives a cue to connect the proposed system with Kripke model. A detailed categorical study of the proposed systems with corresponding topological and algebraic structure has also done in this section. Section 4 contributes some concluding remarks.

\section{Preliminaries}
In this section we include a brief outline of relevant notions to develop our proposed mathematical structures and results. In \cite{AJ, LE, PJ, SV}, one may found the details of the notions stated here.
\begin{definition}[$G$-structured arrow and $G$-costructured arrow]\label{2.1} Let  $G:\mathbb{A}\to\mathbb{B}$ be a functor, where $\mathbb{A},\ \mathbb{B}$ are two categories and let $B$ be a $\mathbb{B}$-object. Then the concepts of $G$-structured arrow and $G$-costructured arrow are defined as follows.
\begin{enumerate}
\item  A $G$-$\mathbf{structured\ arrow\ with\ domain}$ \index{$G$-structured arrow} $B$ is a pair $(f,A)$ consisting of an $\mathbb{A}$-object $A$ and a $\mathbb{B}$-morphism $ f:B\longrightarrow GA$.
\item  A $G$-$\mathbf{costructured\ arrow\ with\ codomain}$ \index{$G$-costructured arrow} $B$ is a pair $(A,f)$ consisting of an $\mathbb{A}$-object $ A$ and a $\mathbb{B}$-morphism $ f:GA\longrightarrow B$.
\end{enumerate}
\end{definition}
\begin{definition}[$G$-universal arrow and $G$-couniversal arrow]\label{2.2}
$G$-universal arrow and $G$-couniversal arrow are defined as follows:
\begin{enumerate}
\item A $G$-structured arrow $(g,A)$ with domain $B$ is called $G$-$\mathbf{universal}$ \index{$G$-universal arrow} for $B$ provided that for each $G$-structured arrow $(g',A')$ with domain $B$, there exists a unique $\mathbb{A}$-morphism $\hat f:A \longrightarrow A'$ with $g'=G(\hat f)\circ g$. i.e., s.t. the triangle
\begin{center}
\begin{tikzpicture}[description/.style={fill=white,inner sep=2pt}] 
    \matrix (m) [matrix of math nodes, row sep=2.5em, column sep=2.5em]
    {B & &GA  \\
        & & GA' \\ }; 
    \path[->,font=\scriptsize] 
        (m-1-1) edge node[auto] {$g$} (m-1-3)
        (m-1-1) edge node[auto,swap] {$g'$} (m-2-3)
        (m-1-3) edge node[auto] {$G\hat f$} (m-2-3);
\end{tikzpicture}
\end{center}
commutes.
\\ We can also represent the above statement by the following diagram
\begin{center}    
\begin{tabular}{ l | r  } 
$\mathbb{B}$ & $\mathbb{A}$\\
\hline
{\begin{tikzpicture}[description/.style={fill=white,inner sep=2pt}] 
    \matrix (m) [matrix of math nodes, row sep=2.5em, column sep=2.5em]
    {B & &GA  \\
        & & GA' \\ }; 
    \path[->,font=\scriptsize] 
        (m-1-1) edge node[auto] {$g$} (m-1-3)
        (m-1-1) edge node[auto,swap] {$g'$} (m-2-3)
        (m-1-3) edge node[auto] {$G\hat f$} (m-2-3);
\end{tikzpicture}}  &  {\begin{tikzpicture}[description/.style={fill=white,inner sep=2pt}] 
    \matrix (m) [matrix of math nodes, row sep=2.5em, column sep=2.5em]
    {& &A  \\
        & &A' \\ }; 
    \path[->,font=\scriptsize]

        (m-1-3) edge node[auto] {$\hat f$} (m-2-3);
\end{tikzpicture}}\\
\end{tabular}
\end{center}
The diagram above indicates the fact that $g:B\longrightarrow GA$ is the $G$-universal arrow provided that for given $g':B\longrightarrow GA'$ there exist a unique $\mathbb{A}-morphisim$ $\hat f:A\longrightarrow A'$ s.t. the triangle commutes.
\item A $G$-costructured arrow $(A,g)$ with codomain $B$ is called $G$-$\mathbf{couniversal}$ \index{$G$-couniversal arrow} for $B$ provided that for each $G$-costructured arrow $(A',g')$ with codomain $B$, there exists a unique $\mathbb{A}$-morphism $\hat f:A' \longrightarrow A$ with $g'=g\circ G(\hat f)$. i.e., s.t. the triangle
\begin{center}
\begin{tikzpicture}[description/.style={fill=white,inner sep=2pt}] 
    \matrix (m) [matrix of math nodes, row sep=2.5em, column sep=2.5em]
    { GA&&B  \\
         GA'\\ }; 
    \path[->,font=\scriptsize] 
        (m-1-1) edge node[auto] {$g$} (m-1-3)
        (m-2-1) edge node[auto] {$G\hat f$} (m-1-1)
        (m-2-1) edge node[auto,swap] {$g'$} (m-1-3)
         ;
\end{tikzpicture}
\end{center}
commutes.
\\ We can also represent the above statement by the following diagram
\begin{center}    
\begin{tabular}{ l | r  } 
$\mathbb{B}$ & $\mathbb{A}$\\
\hline
{\begin{tikzpicture}[description/.style={fill=white,inner sep=2pt}] 
    \matrix (m) [matrix of math nodes, row sep=2.5em, column sep=2.5em]
    { GA&&B  \\
         GA'\\ }; 
    \path[->,font=\scriptsize] 
        (m-1-1) edge node[auto] {$g$} (m-1-3)
        (m-2-1) edge node[auto] {$G\hat f$} (m-1-1)
        (m-2-1) edge node[auto,swap] {$g'$} (m-1-3)
         ;
\end{tikzpicture}}  &  {\begin{tikzpicture}[description/.style={fill=white,inner sep=2pt}] 
    \matrix (m) [matrix of math nodes, row sep=2.5em, column sep=2.5em]
    {& &A  \\
        & &A' \\ }; 
    \path[->,font=\scriptsize]

        (m-2-3) edge node[auto] {$\hat f$} (m-1-3);
\end{tikzpicture}}\\
\end{tabular}
\end{center}
The diagram above indicates the fact that $g:GA\longrightarrow B$ is the $G$-couniversal arrow provided that for given $g':GA'\longrightarrow B'$ there exist a unique $\mathbb{A}-morphism$ $\hat f:A'\longrightarrow A$ s.t. the triangle commutes.
\end{enumerate}
\end{definition}
\begin{definition}[Left Adjoint and Right Adjoint]\label{2.3}
Left Adjoint and Right Adjoint are defined as follows.
\begin{enumerate}
\item A functor $G:\mathbb{A}\longrightarrow \mathbb{B}$ is said to be $\mathbf{left\ adjoint}$ \index{left adjoint} provided that  for every $\mathbb{B}$-object $B$, there exists a $G$-couniversal arrow with codomain $B$.
\\As a consequence, there exists a natural transformation $\eta:id_A\longrightarrow FG$ ($id_A$ is the identity morphism from $A$ to $A$), where  $F:\mathbb{B}\longrightarrow \mathbb{A}$ is a functor s.t. for given $f:A\longrightarrow FB$ there exists a unique $\mathbb{B}$-morphism $\hat f:GA\longrightarrow B$ s.t. the triangle
\begin{center}
\begin{tikzpicture}[description/.style={fill=white,inner sep=2pt}] 
    \matrix (m) [matrix of math nodes, row sep=2.5em, column sep=2.5em]
    { A&&FGA  \\
       &&FB\\ }; 
    \path[->,font=\scriptsize] 
        (m-1-1) edge node[auto] {$\eta_A$} (m-1-3)
        (m-1-1) edge node[auto,swap] {$f$} (m-2-3)
        (m-1-3) edge node[auto] {$F\hat f$} (m-2-3)
         ;
\end{tikzpicture}
\end{center}
commutes.
\\ This $\eta$ is called the unit \index{unit} of the adjunction.
\\ Hence, we have the diagram of unit \index{diagram of!unit} as follows:
\begin{center}    
\begin{tabular}{ l | r  } 
$\mathbb{A}$ & $\mathbb{B}$\\
\hline
{\begin{tikzpicture}[description/.style={fill=white,inner sep=2pt}] 
    \matrix (m) [matrix of math nodes, row sep=2.5em, column sep=2.5em]
    {A & &FGA  \\
        & &FB \\ }; 
    \path[->,font=\scriptsize] 
        (m-1-1) edge node[auto] {$\eta$} (m-1-3)
        (m-1-1) edge node[auto,swap] {$f$} (m-2-3)
        (m-1-3) edge node[auto] {$F\hat f$} (m-2-3);
\end{tikzpicture}}  &  {\begin{tikzpicture}[description/.style={fill=white,inner sep=2pt}] 
    \matrix (m) [matrix of math nodes, row sep=2.5em, column sep=2.5em]
    {& &GA  \\
        & &B \\ }; 
    \path[->,font=\scriptsize]

        (m-1-3) edge node[auto] {$\hat f$} (m-2-3);
\end{tikzpicture}}\\
\end{tabular}
\end{center}
\item A functor $G:\mathbb{A}\longrightarrow \mathbb{B}$ is said to be $\mathbf{right\ adjoint}$ \index{right adjoint} provided that  for every $\mathbb{B}$-object $B$, there exists a $G$-universal arrow with domain $B$.
\\ From the definition above, it follows that there exists a natural transformation $\xi:FG\longrightarrow id_A$ ($id_A$ is the identity morphism from $A$ to $A$), where $F:\mathbb{B}\longrightarrow\mathbb{A}$ is a functor s.t. for given $f':FB\longrightarrow A$, there exists a unique $\mathbb{B}$-morphism $\hat f:B\longrightarrow GA$ s.t the triangle
\begin{center}
\begin{tikzpicture}[description/.style={fill=white,inner sep=2pt}] 
    \matrix (m) [matrix of math nodes, row sep=2.5em, column sep=2.5em]
    { FGA&&A  \\
         FB\\ }; 
    \path[->,font=\scriptsize] 
        (m-1-1) edge node[auto] {$\xi_A$} (m-1-3)
        (m-2-1) edge node[auto] {$F\hat f$} (m-1-1)
        (m-2-1) edge node[auto,swap] {$f'$} (m-1-3)
         ;
\end{tikzpicture}
\end{center}
commutes.
\\ This $\xi$ is called the co-unit \index{co-unit} of the adjunction.
\\ Hence, we have the diagram of co-unit \index{diagram of!co-unit} as follows:
\begin{center}    
\begin{tabular}{ l | r  } 
$\mathbb{A}$ & $\mathbb{B}$\\
\hline
{\begin{tikzpicture}[description/.style={fill=white,inner sep=2pt}] 
    \matrix (m) [matrix of math nodes, row sep=2.5em, column sep=2.5em]
    { FGA&&A  \\
         FB\\ }; 
    \path[->,font=\scriptsize] 
        (m-1-1) edge node[auto] {$\xi$} (m-1-3)
        (m-2-1) edge node[auto] {$F\hat f$} (m-1-1)
        (m-2-1) edge node[auto,swap] {$f'$} (m-1-3)
         ;
\end{tikzpicture}}  &  {\begin{tikzpicture}[description/.style={fill=white,inner sep=2pt}] 
    \matrix (m) [matrix of math nodes, row sep=2.5em, column sep=2.5em]
    {& &GA  \\
        & &B \\ }; 
    \path[->,font=\scriptsize]

        (m-2-3) edge node[auto] {$\hat f$} (m-1-3);
\end{tikzpicture}}\\
\end{tabular}
\end{center}
\end{enumerate}
\end{definition} 
\begin{definition}[Heyting algebra]\label{gf}
An algebra $(A, \vee, \wedge, \rightarrow, \mathbf{1}, \mathbf{0})$ with three binary and two nullary operations is said to be $\textbf{Heyting algebra}$ if  $(A, \vee, \wedge, \mathbf{1}, \mathbf{0})$ is a bounded distributive lattice and $\rightarrow$ is a  binary operation which is adjoint to
 $\wedge$.
\end{definition}
\begin{definition}[G\"{o}del algebra]\label{ga}
A Heyting algebra $A$ satisfying the prelinearity property viz. $(a\rightarrow b)\vee (b\rightarrow a)=\mathbf{1}$, for any $a,b\in A$ is said to be a \textbf{G\"{o}del algebra}.
\end{definition}
\begin{definition}[Heyting homomorphism]\label{hhm}
Let $A$, $B$ be two Heyting algebras. A map $f:A\longrightarrow B$ is said to be $\textbf{Heyting homomorphism}$ if the following conditions hold:\\
(i) $f(a_1\wedge a_2)=f(a_1)\wedge f(a_2)$;\\
(ii) $f(a_1\vee a_2)=f(a_1)\vee f(a_2)$;\\
(iii) $f(a_1\rightarrow a_2)=f(a_1)\rightarrow f(a_2)$;\\
(iv) $f(\mathbf{0})=\mathbf{0}$.
\end{definition}
\textbf{\underline{Note:}} The set of bounded distributive lattice homomorphisms from a Heyting algebra $A$ to the Heyting algebra $(\{0,1\},\vee,\wedge,\rightarrow,1,0)$ will be denoted by $Hom(A,\{0,1\})$ in this paper.

Let us consider the example:
\begin{figure}[h!]
\begin{center}
\begin{tikzpicture}[scale=.7]
\node (one) at (0,2) {$1$};
\node (a) at (0,0) {$a$};
\node (zero) at (0,-2) {$0$};
\node (1) at (4,1) {$1$};
\node (0) at (4,-1) {$0$};
\draw (zero) -- (a) -- (one);
\draw (0) -- (1);
\draw [-latex,blue] (one) -- (1);
\draw [-latex,blue] (a) -- (1);
\draw [-latex,blue] (zero) -- (0);
\node (onen) at (8,2) {$1$};
\node (an) at (8,0) {$a$};
\node (zeron) at (8,-2) {$0$};
\node (1n) at (12,1) {$1$};
\node (0n) at (12,-1) {$0$};
\draw (zeron) -- (an) -- (onen);
\draw (0n) -- (1n);
\draw [-latex,blue] (onen) -- (1n);
\draw [-latex,blue] (an) -- (0n);
\draw [-latex,blue] (zeron) -- (0n);
\end{tikzpicture}
\caption{}
\end{center}
\end{figure}

We have two lattice homomorphisms 
$$h_1(1)=h_1(a)=1\ , h_1(0)=0 \ \text{and}\ h_2(1)=1,\ h_2(a)=h_2(0)=0,\ h_2\leq h_1.$$
Here $h_2\leq h_1$ iff $h_2(a)\leq h_1(a)$ for any $a\in A$.  $h_2$ is not Heyting homomorphism. $h_1$ is the only Heyting homomorphism, which is maximal. But there exists two prime filters in the Heyting algebra (and in the lattice as well):
$$F_1=\{1,a\}\ \text{and}\ F_2=\{1\},\ F_2\subseteq F_1.$$
It is well known Priestley duality between bounded distributive lattices and Priestley spaces $(X,R)$ \cite{BD,HA}. Priestley space is Heyting space (or Esakia space) \cite{MA} if and only if 
$$(*) R^{-1}(U)\ \text{is open for every open set}\ U.$$ So in the construction of Heyting space (or Esakia space) we use Priestley space with the condition $(*)$.

Notice, the restricted Priestley duality for Heyting algebras states that a bounded distributive lattice $A$ is a Heuting algebra if and only if the Priestley dual of $A$ is a Heyting space and a $\{0,1\}$-lattice homomorphism $h$ between Heyting algebras preserves the implication $\rightarrow$ if and only if the Priestley dual of $h$ is a Heyting morphism.
\begin{definition}[{$\mathbf{HA}$}]\label{Grfrm} 
Heyting algebras together with Heyting homomorphisms form a category, which is well known as a category of Heyting algebras and denoted by $\mathbf{HA}$.
\end{definition}
\begin{definition}[{$\mathbf{GA}$}]\label{GA} 
G\"{o}del algebras together with corresponding Heyting homomorphisms form a category, which is well known as a category of G\"{o}del algebras and denoted by $\mathbf{GA}$.
\end{definition}
\begin{definition}[Esakia Space]\label{ES}
An ordered topological space $(X,\leq,\tau)$ is called an $\textbf{Esakia space}$ if 
\begin{itemize}
\item $(X,\tau)$ is compact;
\item for any $x, y\in X$ with $x\nleq y$ there exists a clopen up-set $U\subseteq X$ with $x\in U$, $y\notin U$;
\item for any clopen set $U$, the down-set $\downarrow U$ is also clopen.
\end{itemize}
\end{definition}
Note that an ordered topological space $(X,\leq,\tau)$ together with the first two conditions of Definition \ref{ES} is known as Priestley space.
\begin{definition}[Esakia morphism]
Let $(X,\leq,\tau)$ and $(Y,\leq,\tau')$ be Esakia spaces. Then a map $f:X\longrightarrow Y$ is called an $\textbf{Esakia morphism}$ if $f$ is a continuous bounded morphism (p-morphism), i.e., if for each $x\in X$ and $y\in Y$, if $f(x)\leq y$, then there exists $z\in X$ such that $x\leq z$ and $f(z)=y$.
\end{definition}
\begin{definition}
Esakia spaces together with Esakia morphisms forms a category of Esakia spaces and denoted by $\mathbf{ESA}$.
\end{definition}
\begin{theorem}\label{ESD} \cite{LE}
$\mathbf{HA}$ is dually equivalent with $\mathbf{ESA}$.
\end{theorem}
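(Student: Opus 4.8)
The plan is to establish the dual equivalence by constructing a pair of contravariant functors $(-)_* : \mathbf{HA} \to \mathbf{ESA}$ and $(-)^* : \mathbf{ESA} \to \mathbf{HA}$ together with natural isomorphisms $A \cong \big((A)_*\big)^*$ and $X \cong \big((X)^*\big)_*$, thereby exhibiting the two composites as naturally isomorphic to the respective identity functors. Since the preliminaries already record Priestley duality between bounded distributive lattices and Priestley spaces, I would build the argument as a refinement of that duality: the underlying Priestley machinery supplies the order-topological skeleton, and the work specific to the Heyting/Esakia setting is to check that the extra structure (the operation $\rightarrow$ on the algebraic side, the third axiom ``$\downarrow U$ clopen'' on the spatial side) is matched correctly on both objects and morphisms.

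On objects, the functor $(-)_*$ sends a Heyting algebra $A$ to the space $X_A$ of its prime filters, ordered by inclusion and topologised by the Priestley topology generated by the sets $\phi(a) = \{F : a \in F\}$ and their complements. First I would verify that $X_A$ is genuinely an Esakia space, i.e.\ that beyond being a Priestley space it satisfies ``$\downarrow U$ clopen for clopen $U$''; this is exactly where the implication $\rightarrow$ of $A$ enters, and it is equivalent to the condition $(*)$ recalled in the preliminaries. Conversely $(-)^*$ sends an Esakia space $X$ to the algebra $\mathrm{ClopUp}(X)$ of clopen up-sets, which is a bounded distributive lattice under $\cup, \cap, \emptyset, X$ and which becomes a Heyting algebra once the third Esakia axiom is available, since it lets one define $U \rightarrow V := X \setminus \downarrow(U \setminus V)$ as a clopen up-set serving as the relative pseudocomplement. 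On morphisms both functors act by preimage: a Heyting homomorphism $h : A \to B$ is sent to $h_* : X_B \to X_A$, $F \mapsto h^{-1}(F)$, and an Esakia morphism $f : X \to Y$ to $f^* : \mathrm{ClopUp}(Y) \to \mathrm{ClopUp}(X)$, $U \mapsto f^{-1}(U)$.

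For the natural isomorphisms I would use the Priestley representation. The unit is $a \mapsto \phi(a)$, which Priestley duality already delivers as a lattice isomorphism $A \cong \mathrm{ClopUp}(X_A)$; it then remains only to check that it preserves $\rightarrow$, which follows from the description of implication in $\mathrm{ClopUp}(X_A)$ together with the prime-filter characterisation of $a \rightarrow b$. The counit is $x \mapsto \{U \in \mathrm{ClopUp}(X) : x \in U\}$, sending a point to the prime filter of clopen up-sets it determines; surjectivity onto prime filters (every such filter arises from a point) rests on compactness of $X$, and the map is an order-homeomorphism by the Priestley separation axiom. Naturality in both cases is a routine diagram chase through the preimage definitions.

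The step I expect to be the main obstacle is the morphism correspondence: that $h$ preserving $\rightarrow$ is matched precisely by $h_*$ being a bounded (p-)morphism, and dually that $f$ being a bounded morphism forces $f^*$ to preserve $\rightarrow$. One direction asks, assuming $h$ preserves implication, to show that whenever $h^{-1}(F) \subseteq G$ there is a prime filter $F' \supseteq F$ with $h^{-1}(F') = G$, which requires a prime-filter extension argument controlled by the implication. The converse—recovering preservation of $\rightarrow$ from the p-morphism condition—uses the explicit formula $U \rightarrow V = X \setminus \downarrow(U \setminus V)$ together with the back-condition of the bounded morphism to compare the down-sets $\downarrow(U \setminus V)$ upstairs and downstairs. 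Once this equivalence between the Heyting and the Esakia notions of morphism is secured, functoriality and the fact that the natural isomorphisms are well defined on morphisms follow, completing the dual equivalence.
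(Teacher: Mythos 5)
The paper gives no proof of this theorem at all: it is imported as a known result from Esakia \cite{LE}, with the underlying mechanism (restricted Priestley duality) only recalled informally in the preliminaries. Your outline is exactly that standard argument — the prime-filter spectrum with the Priestley topology on one side, the Heyting algebra of clopen up-sets with $U\rightarrow V=X\setminus\downarrow(U\setminus V)$ on the other, the unit and counit supplied by Priestley representation, and the key equivalence between preservation of $\rightarrow$ and the p-morphism back-condition — so it is correct and coincides with the approach the paper implicitly relies on.
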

\section{Categories: I Top, I TopSys, HA and their interrelationships}
Suppose that we have the algebras $A$ and $B$, and two homomorphisms $h_1$, $h_2$ from $A$ to $B$. Then we can define the ordering $R$ on the set of all homomorphisms from $A$ to $B$:
$$h_1Rh_2\ \text{iff}\ h_1(a)\leq h_2(a)\ \text{for all}\ a\in A.$$
So, (Hom(A,\{0,1\}),R) is a poset, where $A$ is a Heyting algebra and $Hom(A,\{0,1\})$ is the set of all \textbf{bounded distributive lattice} homomorphisms from $A$ to $(\{0,1\},\vee,\wedge,\rightarrow,1,0)$.
\begin{definition}[I-topological system]\label{gftsy}
An \textbf{I-topological system} is a triple $(X,\models,A)$ consisting of a nonempty set $X$, a Heyting algebra $A$ and a relation $\models$ from $X$ to $A$ such that
\begin{enumerate}
\item $x\models \mathbf{0}$ for no $x\in X$;
\item $x\models a\wedge b$ iff $x\models a$ and $x\models b$;
\item $x\models a\vee b$ iff $x\models a$ or $x\models b$;

\item $x\models a\rightarrow b$ iff for all $y\in X$ such that $p^*(x)Rp^*(y)$, $y\not\models a$ or $y\models b$, where $p^*:X\to Hom(A,\{0,1\})$ such that $p^*(x)(a)=1\ \text{iff}\ x\models a$.
\end{enumerate} 
\end{definition}
From Definition \ref{gftsy} it is easy to deduce that:
$$x\models\neg a\ \text{iff}\ \text{for all}\ y\in X\ \text{such that}\ p^*(x)Rp^*(y), y\not\models a.$$
Now, let us show that $x\not\models a\vee\neg a$, for some $x\in X$. Let $X=\{x,y\}$ and $A=(\{0,a,1\},\vee,\wedge,\rightarrow,1,0)$, where $0\leq a\leq 1$. Then we have two bounded distributive lattice homomorphisms $p'(x)(=h_2)$ and $p'(y)(=h_1)$ ($h_1$ and $h_2$ are represented in Figure 1) and $p'(x)\leq p'(y)$. Let us consider $$x\models a\ \text{iff}\ p'(x)(a)=1.$$ Then clearly $y\models a$ and $x\not\models a$. So it can be derived that $x\not\models \neg a$. Hence $y\models a\vee\neg a$ but $x\not\models a\vee\neg a$. Consequently we may conclude that for this choice of $x\in X$, $x\not\models a\vee\neg a$.
\begin{proposition}\label{prop1}
$x\models \mathbf{1}$ for any $x\in X$.
\end{proposition}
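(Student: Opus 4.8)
The plan is to express the top element $\mathbf{1}$ of the Heyting algebra $A$ as a Heyting implication and then invoke condition~(4) of Definition~\ref{gftsy}, reducing the claim to the trivial first condition. First I would recall the purely algebraic fact that in any Heyting algebra $\mathbf{0}\rightarrow\mathbf{0}=\mathbf{1}$; indeed $\mathbf{0}\rightarrow b=\mathbf{1}$ for every $b$, since $\mathbf{0}\rightarrow b$ is by definition the largest $c$ with $c\wedge\mathbf{0}\le b$, and this inequality holds already for $c=\mathbf{1}$. Thus it suffices to prove $x\models\mathbf{0}\rightarrow\mathbf{0}$ for an arbitrary $x\in X$.

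Next I would apply condition~(4) with $a=b=\mathbf{0}$: by definition $x\models\mathbf{0}\rightarrow\mathbf{0}$ holds precisely when, for every $y\in X$ with $p^{*}(x)Rp^{*}(y)$, either $y\not\models\mathbf{0}$ or $y\models\mathbf{0}$. The key observation is that condition~(1) of the same definition guarantees $y\not\models\mathbf{0}$ for \emph{every} $y\in X$. Hence the left disjunct of the required implication is satisfied for every relevant $y$, so the universally quantified statement holds regardless of whether the set $\{\,y:p^{*}(x)Rp^{*}(y)\,\}$ is empty or not. Therefore $x\models\mathbf{0}\rightarrow\mathbf{0}$, and combining this with the algebraic identity above yields $x\models\mathbf{1}$.

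I do not anticipate a genuine obstacle here: the statement is essentially a consistency-type consequence of the definition. The only point requiring a little care is the logical reading of condition~(4), namely that the universal quantifier over $y$ is discharged as soon as the disjunct $y\not\models\mathbf{0}$ holds for all $y$, so one never needs to exhibit any $y$ or analyse the relation $R$ in detail. If one preferred to avoid the explicit identity $\mathbf{0}\rightarrow\mathbf{0}=\mathbf{1}$, an alternative route is to pick any $a\in A$ and use $a\rightarrow a=\mathbf{1}$ together with the tautological disjunction ``$y\not\models a$ or $y\models a$''; this makes the very same argument go through without reference to condition~(1), and I would record it as a brief remark.
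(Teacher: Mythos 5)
Your proposal is correct, and its core structure matches the paper's: express $\mathbf{1}$ as a Heyting implication and unfold condition~(4) of Definition~\ref{gftsy}. The difference lies in how the resulting universally quantified disjunction is discharged. The paper takes $\mathbf{1}=a\rightarrow a$ for an arbitrary $a\in A$ and observes that ``$y\not\models a$ or $y\models a$'' is a meta-level tautology, since $\models$ is a crisp relation that either holds or fails; this is precisely the alternative you record in your closing remark. Your primary route instead instantiates $a=b=\mathbf{0}$, so that $\mathbf{1}=\mathbf{0}\rightarrow\mathbf{0}$, and then discharges the quantifier using condition~(1) of the definition ($y\not\models\mathbf{0}$ for every $y$), so the left disjunct holds uniformly and no case analysis on the satisfaction relation is needed. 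What each buys: the paper's argument is a one-liner but implicitly invokes the classical dichotomy on $\models$; yours trades that for an appeal to condition~(1) plus the algebraic identity $\mathbf{0}\rightarrow b=\mathbf{1}$, which you correctly justify from the adjunction defining $\rightarrow$. Both are complete proofs, and your observation that the argument is insensitive to whether $\{y : p^{*}(x)Rp^{*}(y)\}$ is empty is a fair point of care, though in fact $R$ is reflexive, so that set always contains $p^{*}(x)$.
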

\begin{proof}
$x\models \mathbf{1}$ iff $x\models a\rightarrow a$ iff for all $y\in X$ such that $p^*(x)Rp^*(y)$, $y\not\models a$ or $y\models a$. As for any $x\in X$ and $a\in A$ either $x\models a$ or $x\not\models a$ holds, $x\models \mathbf{1}$ for any $x\in X$.
\end{proof}
\begin{definition}[Heyting algebraic I-topological system]\label{HITopSys}
An I-topological system $(X,\models,A)$ is said to be \textbf{Heyting algebraic} if the map $p^*:X\longrightarrow Hom(A,\{0,1\})$ defined by, $p^*(x)(a)=1$ iff $x\models a$ for $x\in X$ and $a\in A$, is a bijective mapping.
\end{definition}
\begin{definition}
An I-topological system $(X,\models, A)$ is said to be $\mathbf{T_0}$ iff (if $x_1\neq x_2$ then there exist some $a\in A$ such that $x_1\models a$ but $x_2\not\models a$).
\end{definition}
\begin{proposition}
Any Heyting algebraic I-topological system is $T_0$.
\end{proposition}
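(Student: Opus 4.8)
The plan is to read off the $T_0$ separation directly from the injectivity of $p^*$ that is built into the definition of a Heyting algebraic I-topological system. The only hypothesis I have is that the map $p^*:X\longrightarrow Hom(A,\{0,1\})$, $x\mapsto p^*(x)$ with $p^*(x)(a)=1$ iff $x\models a$, is a bijection; of this I will use only injectivity, so the argument will in fact apply to any system whose $p^*$ is injective.

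First I would fix two distinct points $x_1,x_2\in X$ and apply injectivity of $p^*$ to conclude $p^*(x_1)\neq p^*(x_2)$ as elements of $Hom(A,\{0,1\})$. Two functions sharing a codomain are unequal precisely when they take different values at some argument, so there exists $a\in A$ with $p^*(x_1)(a)\neq p^*(x_2)(a)$.

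Second, since the common codomain is the two-element set $\{0,1\}$, the inequality $p^*(x_1)(a)\neq p^*(x_2)(a)$ forces one of the two values to be $1$ and the other $0$. Relabelling $x_1,x_2$ if necessary, I may take $p^*(x_1)(a)=1$ and $p^*(x_2)(a)=0$. Unwinding the defining equivalence of $p^*$, this says exactly that $x_1\models a$ and $x_2\not\models a$, which is the separating element demanded by the $T_0$ condition.

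I do not anticipate any real obstacle; the statement is essentially the tautology ``injective implies points are separated by some $a$.'' The single point worth flagging concerns the directionality in the stated $T_0$ clause (``$x_1\models a$ but $x_2\not\models a$''): because one element of $Hom(A,\{0,1\})$ can sit strictly below another in the pointwise order, as Figure 1 illustrates with $h_2\leq h_1$, a given separating $a$ need not be available in a prescribed direction for a prescribed labelling of the pair. Since the $T_0$ condition is asserted for every ordered pair of distinct points, I would record the conclusion symmetrically, namely that there is an $a\in A$ for which exactly one of $x_1\models a$, $x_2\models a$ holds, and note that this is precisely what the $T_0$ clause intends. Apart from this bookkeeping, the proof is a one-line consequence of injectivity of $p^*$.
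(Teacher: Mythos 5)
Your proof is correct and is essentially the paper's own argument: bijectivity of $p^*$ gives injectivity, hence for $x_1\neq x_2$ there is some $a\in A$ with $p^*(x_1)(a)\neq p^*(x_2)(a)$, which by the definition of $p^*$ is the required separation. Your directionality caveat is also well taken: the paper's proof silently glosses over it, and indeed the paper's own Figure 1 situation (two homomorphisms with $h_2\leq h_1$ pointwise, realized as points of a Heyting algebraic system via Lemma \ref{lemma2}) shows the literal ordered-pair reading of the $T_0$ clause cannot always be met, so the symmetric reading you adopt is the one under which the proposition actually holds.
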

\begin{proof}
For Heyting algebraic I-topological system $(X,\models,A)$, the map $p^*:X\longrightarrow Hom(A,\{0,1\})$ is bijective and consequently injective. Hence if $x_1\neq x_2$ then $p^*(x_1)\neq p^*(x_2)$ and hence there exist $a\in A$ such that $p^*(x_1)(a)\neq p^*(x_2)(a)$. So as per the definition of $p^*$ it is clear that the system is $T_0$.
\end{proof}

\begin{definition}[G\"{o}del algebraic I-topological system]\label{GITopSys}
A \textbf{G\"{o}del algebraic I-topological system} is a triple $(X,\models,A)$ consisting of a non empty set $X$, a G\"{o}del algebra $A$ and a binary relation $\models$ from $X$ to $A$ such  that 
\begin{enumerate}
\item $x\models \mathbf{0}$ for no $x\in X$;
\item $x\models a\wedge b$ iff $x\models a$ and $x\models b$;
\item $x\models a\vee b$ iff $x\models a$ or $x\models b$;
\item $x\models a\rightarrow b$ iff for all $y\in X$ such that $p^*(x)Rp^*(y)$, $y\not\models a$ or $y\models b$, where $p^*:X\to Hom(A,\{0,1\})$ such that $p^*(x)(a)=1\ \text{iff}\ x\models a$;
\item the map $p^*:X\longrightarrow Hom(A,\{0,1\})$ defined by, $p^*(x)(a)=1$ iff $x\models a$ for $x\in X$ and $a\in A$, is a bijective mapping.
\end{enumerate} 
\end{definition}
\subsection{Kripke model for intuitionistic logic and I-topological system}
In this subsection we will indicate the connection of the notion of I-topological system with Kripke model for intuitionistic logic \cite{AM}.
\begin{definition}
A Kripke frame $\mathscr{F}$ is a pair $(W,\mathscr{R})$ consisting of a nonempty set of worlds (or points), $W$, and a partial order relation $\mathscr{R}$ on $W$ ($\mathscr{R}\subseteq W\times W$). 
\end{definition}
\begin{definition}
A Kripke model $\mathscr{M}$ is a pair $(\mathscr{F},v)$ consisting of a Kripke frame $\mathscr{F}$ and a valuation map $v:W\times \mathbf{V}\to \{0,1\}$, where $\mathbf{V}$ is the set of propositional variables such that:
\begin{enumerate}
    \item for all $w\in W$ and for all propositional variables $p\in \mathbf{V}$, if $v(w,p)=1$ and $w\mathscr{R} u$ then $v(u,p)=1$;
    \item $v(w,\bot)=0$ for all $w\in W$.
\end{enumerate}
\end{definition}
\begin{definition}
Let $\mathscr{M}$ be a Kripke model for intuitionistic logic and $w$ be a world in the frame $\mathscr{F}$. By induction on the construction of a formula $a$ we define a relation $(\mathscr{M},w)\Vdash a$, which is read as ``$a$ is true at $w$ in $\mathscr{M}$":
\begin{itemize}
    \item $\mathscr{M},w\Vdash p\ \text{iff}\ v(w,p)=1$;
    \item $\mathscr{M},w\Vdash a\wedge b\ \text{iff}\ \mathscr{M},w\Vdash a\ \text{and}\ \mathscr{M},w\Vdash b$;
    \item $\mathscr{M},w\Vdash a\vee b\ \text{iff}\ \mathscr{M},w\Vdash a\ \text{or}\ \mathscr{M},w\Vdash b$;
    \item $\mathscr{M},w\Vdash \neg a\ \text{iff}\ \forall u\geq w,\ \mathscr{M},u\not\Vdash a$;
    \item $\mathscr{M},w\Vdash a\rightarrow b\ \text{iff}\ \forall w\mathscr{R} u,\ \text{if}\ \mathscr{M},u\Vdash a \ \text{then} \mathscr{M},u\Vdash b$;
    \item $\mathscr{M},w\not\Vdash\bot$.
\end{itemize}
\end{definition}
Let $(X,\models , A)$ be an I-topological system. Then consider the relation $\mathscr{R}$ on $X$ such that 
$$x\mathscr{R}y\ \text{iff}\ p^*(x)Rp^*(y),\ \text{where}\ p^*(x)(a)=1\ \text{iff}\ x\models a.$$
It may be noticed that $(X,\mathscr{R})$ is a partially ordered set. Hence $(X,\mathscr{R})$ is a Kripke frame.

Moreover if we consider $v:X\times A\to \{0,1\}$ such that $v(x,a)=1\ \text{iff}\ x\models a$ then the following holds.
\begin{enumerate}
    \item For all $x\in X$ and for all $a\in A$ let us assume that $v(x,a)=1$ and $x\mathscr{R}y$. Then we have $x\models a$ and $p^*(x)Rp^*(y)$ i.e. $p^*(x)(a)\leq p^*(y)(a)$. As $x\models a$, $p^*(x)(a)=1=p^*(y)(a)$. Hence $y\models a$. Therefore for all $x\in X$ and for all $a\in A$, $v(x,a)=1$  and $x\mathscr{R}y$ implies $v(y,a)=1$.
    \item We know $v(x,\mathbf{0})=1\ \text{iff}\ x\models \mathbf{0}$. But $x\models \mathbf{0}$ for no $x\in X$. Hence for all $x\in X$, $v(x,\mathbf{0})=0$.
\end{enumerate}
Consequently $(X,\mathscr{R},v)$ is a Kripke model.

Now let us define $x\Vdash a$ iff $x\models a$. Then,
\begin{itemize}
    \item $x\Vdash a$ iff $x\models a$ iff $v(x,a)=1$;
    \item $x\Vdash a\wedge b$ iff $x\models a\wedge b$ iff $x\models a$ and $x\models b$ iff $x\Vdash a$ and $x\Vdash b$; 
    \item $x\Vdash a\vee b$ iff $x\models a\vee b$ iff $x\models a$ or $x\models b$ iff $x\Vdash a$ or $x\Vdash b$; 
    \item Let $x\Vdash a\rightarrow b$. Then,
    \begin{align*}
    x\Vdash a\rightarrow b\  & \text{iff}\ x\models a\rightarrow b \\
    & \text{iff for all}\ y\in X\  \text{such that}\ p^*(x)Rp^*(y), y\not\models a\ \text{or}\ y\models b \\
    & \text{iff} \text{ for all}\ y\in X \ \text{and}\ p^*(x)Rp^*(y),\ y\not\Vdash a\ \text{or}\ y\Vdash b \\
    & \text{iff for all}\ y\in X\ \text{and}\ x\mathscr{R}y,\ \text{if}\ y\Vdash a\ \text{then}\ y\Vdash b;
    \end{align*}
    \item As $x\models \mathbf{0}$ for no $x\in X$, $x\not\Vdash \mathbf{0}$.
\end{itemize}
Summarizing all sayings above  we can deduce the following 
Theorem.
\begin{theorem}
Let $(X,\models,A)$ be an I-topological system. Then $(X,\mathscr{R},v)$, defined as above, is an intuitionistic Kripke model.
\end{theorem}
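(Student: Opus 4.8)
The plan is to verify, one after another, the defining ingredients of an intuitionistic Kripke model for the data $(X,\mathscr{R},v)$ introduced above: that $(X,\mathscr{R})$ is a Kripke frame, that $v$ is an admissible valuation, and --- as the substantive content justifying the label ``intuitionistic'' --- that the forcing relation obtained from $v$ by the inductive clauses coincides with $\models$. All three reduce to unwinding the defining axioms (1)--(4) of the I-topological system through the single link $x\Vdash a \Leftrightarrow x\models a$ together with the definition $x\mathscr{R}y \Leftrightarrow p^*(x)Rp^*(y)$.

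First I would check that $\mathscr{R}$ is a frame relation. Since $\mathscr{R}$ is the pullback along $p^*$ of the pointwise order $R$ on $Hom(A,\{0,1\})$, it inherits reflexivity and transitivity immediately from those of $R$. The delicate point is antisymmetry: $x\mathscr{R}y$ and $y\mathscr{R}x$ yield only $p^*(x)=p^*(y)$, not $x=y$, so $\mathscr{R}$ is in general only a preorder and is a genuine partial order exactly when $p^*$ is injective (as for Heyting algebraic systems). Here I would either restrict to that case or note that intuitionistic Kripke semantics is standardly insensitive to replacing a partial order by a preorder, so the conclusion is unaffected. Next, for the valuation, persistence follows by reading $x\mathscr{R}y$ as $p^*(x)(a)\le p^*(y)(a)$ for every $a$: if $v(x,a)=1$ then $p^*(x)(a)=1$, hence $p^*(y)(a)=1$ and $v(y,a)=1$; and $v(x,\mathbf{0})=0$ is immediate from axiom (1), which forbids $x\models\mathbf{0}$.

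It remains to match the forcing clauses with the algebraic structure, identifying each element of $A$ with the corresponding formula. The atomic clause is the definition of $v$; the $\wedge$ and $\vee$ clauses are axioms (2) and (3) verbatim; the $\bot$ clause is axiom (1). The only step requiring attention is the implication clause, where one must see that axiom (4) says precisely that $x\models a\rightarrow b$ holds iff for every $\mathscr{R}$-successor $y$ of $x$ one has $y\not\models a$ or $y\models b$, which, rewritten with $\Vdash$, is exactly ``for all $y$ with $x\mathscr{R}y$, if $y\Vdash a$ then $y\Vdash b$''; the derived negation clause then follows by taking $b=\mathbf{0}$. The main obstacle is thus concentrated entirely in the $\rightarrow$ (hence $\neg$) clause and the frame relation: one must ensure that the quantifier in axiom (4) ranges over exactly the $\mathscr{R}$-successors of $x$ --- which is forced by the definition of $\mathscr{R}$ --- and that the defining equivalence $p^*(x)(a)=1 \Leftrightarrow x\models a$ is applied consistently in both directions throughout.
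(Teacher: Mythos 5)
Your proof is correct and follows essentially the same route as the paper: pull back the pointwise order $R$ along $p^*$ to define $\mathscr{R}$, verify persistence and $v(x,\mathbf{0})=0$ for the valuation, and match each forcing clause of $\Vdash$ with the corresponding axiom (1)--(4) of the I-topological system, the implication clause carrying the only real content. You are in fact more careful than the paper on one point: the paper simply asserts that $(X,\mathscr{R})$ is a partially ordered set, whereas you correctly note that antisymmetry can fail when $p^*$ is not injective, so $\mathscr{R}$ is in general only a preorder --- a gap your remark (preorders suffice for intuitionistic Kripke semantics, or restrict to the Heyting algebraic case where $p^*$ is bijective) properly repairs.
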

\subsection{Categories}

\begin{definition}[{$\mathbf{I-TopSys}$}]\label{Grftsy} 
The category $\mathbf{I-TopSys}$\index{category!Graded Fuzzy TopSys} is defined thus.
\begin{itemize}
\item The objects are I-topological systems $(X,\models,A)$, $(Y,\models,B)$ etc. (c.f. Definition \ref{gftsy}).
\item The morphisms are pair of maps satisfying the following continuity properties: If $(f_1,f_2):(X,\models ,A)\longrightarrow (Y,\models',B)$ then\\
(i) $f_1:X\longrightarrow Y$ is a set map;\\
(ii) $f_2:B\longrightarrow A$ is a Heyting homomorphism;\\
(iii) $x\models f_2(b)$ iff $f_1(x)\models' b$.
\item The identity on $(X,\models,A)$ is the pair $(id_X,id_A)$, where $id_X$ is the identity map on $X$ and $id_A$ is the identity Heyting homomorphism. That this is an $\mathbf{I-TopSys}$ morphism\index{Graded Fuzzy TopSys!morphism} can be proved.
\item If $(f_1,f_2):(X,\models,A)\longrightarrow (Y,\models',B)$ and 
$(g_1,g_2):(Y,\models',B)\longrightarrow (Z,\models'',C)$ are morphisms in $\mathbf{I-TopSys}$, their 
composition $(g_1,g_2)\circ (f_1,f_2)=(g_1\circ f_1,f_2\circ g_2)$ is the pair of composition of functions between two sets and composition of Heyting homomorphisms between two Heyting algebras. It can be verified that 
$(g_1,g_2)\circ (f_1,f_2)$ is a morphism in $\mathbf{I-TopSys}$.
\end{itemize}
\end{definition}
\begin{definition}[{$\mathbf{HI-TopSys}$}]\label{hits}
Heyting algebraic I-topological systems (c.f. Definition \ref{HITopSys}) together with corresponding $\mathbf{I-TopSys}$ morphisms form a category and called $\mathbf{HI-TopSys}$.
\end{definition}
\begin{definition}[{$\mathbf{GI-TopSys}$}]\label{gits}
G\"{o}del algebraic I-topological systems (c.f. Definition \ref{GITopSys}) together with corresponding $\mathbf{I-TopSys}$ morphisms form a category and called $\mathbf{GI-TopSys}$.
\end{definition}

\subsection{Functors }
Let us construct suitable functors among the above mentioned categories as follows to establish their interrelations. 

\begin{definition}
$H$ is a functor from $\mathbf{HI-TopSys}$ to $\mathbf{HA^{op}}$ defined as follows:\\
$H$ acts on an object $(X,\models, A)$ as $H((X,\models , A))=A$ and on a morphism $(f_1,f_2)$ as $H((f_1,f_2))=f_2$.
\end{definition}
It is easy to verify that $H$ is indeed a functor.
\begin{definition}
$\mathscr{G}$ is a functor from $\mathbf{GI-TopSys}$ to $\mathbf{GA^{op}}$ defined as follows:\\
$\mathscr{G}$ acts on an object $(X,\models, A)$ as $\mathscr{G}((X,\models , A))=A$ and on a morphism $(f_1,f_2)$ as $\mathscr{G}((f_1,f_2))=f_2$.
\end{definition}
It is easy to verify that $\mathscr{G}$ is indeed a functor.

\begin{lemma}\label{lemma1}
$(Hom(A,\{0,1\}),\models^*,A)$, where $A$ is a Heyting algebra and $v\models^* a$ iff $v(a)=1$, is an $I$-topological system.
\end{lemma}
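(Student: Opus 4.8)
The plan is to verify the three structural conditions and the implication condition of Definition~\ref{gftsy} for the triple $(Hom(A,\{0,1\}),\models^*,A)$. The pivotal preliminary observation is that here the map $p^*$ coincides with the identity: since $X=Hom(A,\{0,1\})$ and $p^*(v)(a)=1$ iff $v\models^* a$ iff $v(a)=1$, we get $p^*(v)=v$ for every $v\in X$. Consequently the order $R$ appearing in condition~(4) is simply the pointwise order on $Hom(A,\{0,1\})$, i.e.\ $vRw$ iff $v(c)\leq w(c)$ for all $c\in A$. It is convenient to translate everything into the language of prime filters: to each $v$ associate $F_v=\{c\in A: v(c)=1\}$, which is a proper prime filter because $v$ is a $\{0,1\}$-lattice homomorphism, and to note that $vRw$ iff $F_v\subseteq F_w$. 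Nonemptiness of $X$ (needed for the triple to qualify as a system) follows from the prime ideal theorem applied to a nontrivial $A$.

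First I would dispatch conditions~(1)--(3), which are immediate from the fact that $v$ preserves $\mathbf{0}$, $\wedge$ and $\vee$. Indeed $v(\mathbf{0})=0$ gives $v\not\models^*\mathbf{0}$ for every $v$; the equality $v(a\wedge b)=v(a)\wedge v(b)$ gives $v\models^* a\wedge b$ iff $v\models^* a$ and $v\models^* b$; and $v(a\vee b)=v(a)\vee v(b)$ yields the disjunction clause. These are routine and require no choice-theoretic input.

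The main work, and the step I expect to be the principal obstacle, is condition~(4): $v\models^* a\rightarrow b$ iff for every $w$ with $vRw$ one has $w\not\models^* a$ or $w\models^* b$. In prime-filter language this reads $a\rightarrow b\in F_v$ iff for every prime filter $F_w\supseteq F_v$ either $a\notin F_w$ or $b\in F_w$. The forward direction I would obtain from the Heyting identity $a\wedge(a\rightarrow b)\leq b$: if $a\rightarrow b\in F_v\subseteq F_w$ and $a\in F_w$, then closure of $F_w$ under meets together with upward closure forces $b\in F_w$. The delicate converse is where a genuine tool is needed: assuming $a\rightarrow b\notin F_v$, I would form the filter $G$ generated by $F_v\cup\{a\}$, namely $G=\{c: c\geq f\wedge a\ \text{for some}\ f\in F_v\}$, and use the adjunction $f\wedge a\leq b \Leftrightarrow f\leq a\rightarrow b$ to show $b\notin G$ (otherwise some $f\in F_v$ would satisfy $f\leq a\rightarrow b$, placing $a\rightarrow b$ into the upward-closed $F_v$). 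Then the prime filter extension theorem yields a prime filter $F_w\supseteq G$ with $b\notin F_w$; this $F_w$ extends $F_v$, contains $a$, and omits $b$, contradicting the right-hand side. The crux is thus the separation of $G$ from $b$ by a prime filter, which rests on the prime ideal theorem, while the surrounding manipulation is merely the Heyting adjunction. Assembling the four verifications establishes that $(Hom(A,\{0,1\}),\models^*,A)$ is an $I$-topological system.
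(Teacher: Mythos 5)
Your proof is correct, and it follows the same skeleton as the paper's: conditions (1)--(3) of Definition~\ref{gftsy} are dispatched by the homomorphism properties of $v$, the forward half of condition (4) comes from monotonicity of $v'$ together with $a\wedge(a\rightarrow b)\leq b$, and the converse half hinges on producing some $w\geq v$ with $w(a)=1$ and $w(b)=0$ whenever $v(a\rightarrow b)=0$. The difference lies in how that last point is handled. The paper simply asserts that ``it is possible to choose $w\in Hom(A,\{0,1\})$ such that $w(a)=1$ and $w(b)=0$'' with $vRw$; no construction is given, and its surrounding case analysis even contains a spurious justification (the claim that $v^{-1}(0)$ being an ideal forces $v(a)=0$ from $v(a\rightarrow b)=0$ and $v(b)=0$ is not a valid lattice-theoretic inference, though harmless there, since $v(a)=0$ already follows from the hypothesis applied to $v'=v$). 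You supply exactly the missing argument: the filter generated by $F_v\cup\{a\}$ omits $b$ by the Heyting adjunction $f\wedge a\leq b\Leftrightarrow f\leq a\rightarrow b$, and the prime filter separation theorem then yields the required $w$. So your version is a completion of the paper's proof rather than a different route, and it also makes explicit two things the paper leaves implicit: that $p^*$ is the identity on $Hom(A,\{0,1\})$ (so $R$ is the pointwise order), and that the forward direction must go through $(a\rightarrow b)\wedge a\leq b$ rather than through preservation of $\rightarrow$, since elements of $Hom(A,\{0,1\})$ are only bounded lattice homomorphisms. One caveat worth keeping from your write-up: for the one-element Heyting algebra, $Hom(A,\{0,1\})$ is empty, so nonemptiness of $X$ genuinely requires $\mathbf{0}\neq\mathbf{1}$, a point the paper passes over as well.
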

\begin{proof}
Let us proceed in the following way.

(i) $v\models^*\mathbf{0}$ iff $v(\mathbf{0})=1$, but as $v$ is a bounded distributive lattice homomorphism so, $v(\mathbf{0})=0$. Hence $v\models^*\mathbf{0}$ for no $v\in Hom(A,\{0,1\})$.

(ii) $v\models^* a\wedge b$ iff $v(a\wedge b)=1$ iff $v(a)\wedge v(b)=1$ iff $v(a)=1$ and $v(b)=1$ iff $v\models^* a$ and $v\models^* b$.

(iii) $v\models^* a\vee b$ iff $v(a\vee b)=1$ iff $v(a)\vee v(b)=1$ iff $v(a)=1$ or $v(b)=1$ iff $v\models^* a$ or $v\models^* b$.

(iv) Let us assume that $v\models^* a\rightarrow b$. We have $v\models^* a\rightarrow b$ iff $v(a\rightarrow b)=1$. Now for any $v'\in Hom(A,\{0,1\})$ such that $v\leq v'$, we have $v'(a\rightarrow b)=1$. So $v'(a)\rightarrow v'(b)=1$. Hence $v'(a)=0$ or $v'(b)=1$. Consequently $v'\not\models^* a$ or $v'\models^* b$ for any $v'\in Hom(A,\{0,1\})$ such that $vRv'$.

Let for all $v'\in Hom(A,\{0,1\})$ such that $vRv'$, $v'\not\models^*a$ or $v'\models^* b$, i.e., $v'(a)=0$ or $v'(b)=1$. In particular we have $v(a)=0$ or $v(b)=1$. We need to show that $v(a\rightarrow b)=1$ i.e., $v\models^* a\rightarrow b$. For any Heyting algebra $A$ and $a,b\in A$ it is known that $b\leq a\rightarrow b$ and so $v(b)\leq v(a\rightarrow b)$. Hence for $v(b)=1$, $v(a\rightarrow b)=1$. Now when $v(b)=0$, if possible let us assume that $v(a\rightarrow b)=0$. Now $v^{-1}(0)$ is an ideal so $v(a\rightarrow b)=0$ and $v(b)=0$ implies $v(a)=0$. In this case $v(a)\rightarrow v(b)=1$ but it is possible to choose $w\in Hom(A,\{0,1\})$ such that $w(a)=1$ and $w(b)=0$. For this choice of $w$ it is clear that $vRw$ but $w\models^* a$ and $w\not\models^* b$, which contradicts our assumption. Hence $v(a\rightarrow b)=1$ for this case.

Hence we can conclude that $v\models^* a\rightarrow b$ iff for all $v'\in Hom(A,\{0,1\})$ such that $vRv'$, $v'\not\models^*a$ or $v'\models^* b$.
\end{proof}
\begin{corollary}\label{cor1}
$(Hom(A,\{0,1\}),\models^*,A)$, where $A$ is a G$\ddot{o}$del algebra and $v\models^* a$ iff $v(a)=1$, is an $I$-topological system.
\end{corollary}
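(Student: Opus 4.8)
The plan is to derive this statement directly from \rlemma{lemma1} by the simple observation that the hypothesis on $A$ has merely been strengthened, while the claimed conclusion is unchanged. Recall that by \rdef{ga} a G\"{o}del algebra is precisely a Heyting algebra satisfying the additional prelinearity identity $(a\rightarrow b)\vee(b\rightarrow a)=\mathbf{1}$. In particular, every G\"{o}del algebra is a Heyting algebra. On the other hand, the notion of an $I$-topological system in \rdef{gftsy} requires of $A$ only that it be a Heyting algebra; no prelinearity is demanded, and conditions (1)--(4) refer only to $\vee$, $\wedge$, $\rightarrow$, $\mathbf{0}$ and the ordering $R$ on $Hom(A,\{0,1\})$.

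First I would note that \rlemma{lemma1} already verifies all four defining conditions of an $I$-topological system for the triple $(Hom(A,\{0,1\}),\models^*,A)$ under the sole assumption that $A$ is a Heyting algebra. Since a G\"{o}del algebra $A$ is in particular a Heyting algebra, the hypotheses of \rlemma{lemma1} are met verbatim, and the same verification applies without change. Thus it suffices to instantiate the lemma at a G\"{o}del algebra.

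There is essentially no obstacle here, since nowhere in the proof of \rlemma{lemma1} is prelinearity invoked: the arguments for (i)--(iii) use only that $\models^*$ reads off the homomorphism values and that bounded lattice homomorphisms preserve $\vee,\wedge,\mathbf{0}$, while the argument for (iv) uses only the residuation inequality $b\leq a\rightarrow b$, the fact that $v^{-1}(0)$ is an ideal, and the existence of a suitable point $w$ of $Hom(A,\{0,1\})$ separating $a$ from $b$ --- all of which remain valid in any Heyting algebra and hence in any G\"{o}del algebra. Consequently the corollary follows immediately as a special case of \rlemma{lemma1}, obtained by restricting the class of admissible algebras $A$ from Heyting algebras to G\"{o}del algebras.
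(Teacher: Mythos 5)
Your proposal is correct and matches the paper's intent exactly: the paper states this as an immediate corollary of Lemma~\ref{lemma1} (with no separate proof), precisely because every G\"{o}del algebra is by Definition~\ref{ga} a Heyting algebra, so the lemma applies verbatim. Your additional observation that prelinearity is never invoked in the lemma's proof is accurate and confirms the specialization is all that is needed.
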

\begin{lemma}\label{lemma2}
For any Heyting algebra $A$, $(Hom(A,\{0,1\}),\models^*,A)$ have the following properties.\\
(i) if for any $a,b\in A$, $(v\models^* a$ iff $v\models^*b$ for any $v\in Hom(A,\{0,1\}))$ then $a=b$.\\
(ii) if $v_1\neq v_2$ then there exist $a\in A$ such that $v_1\models^* a$ but $v_2\not\models^*a$.\\
(iii) $p^*:Hom(A,\{0,1\})\longrightarrow Hom(A,\{0,1\})$ defined by, $p^*(v)(a)=1$ iff $v\models^*a$ is a bijection.
\end{lemma}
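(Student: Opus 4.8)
The plan is to handle all three parts through the standard dictionary between bounded distributive lattice homomorphisms $v:A\to\{0,1\}$ and prime filters of $A$: a map $v$ lies in $Hom(A,\{0,1\})$ precisely when $v^{-1}(1)$ is a proper prime filter of $A$, and the relation $v\models^* a$ means exactly $a\in v^{-1}(1)$. The only genuinely external ingredient is the Birkhoff--Stone prime filter theorem, which states that in a distributive lattice a filter disjoint from an ideal can be extended to a prime filter still disjoint from that ideal. This is what gives part (i) its content, and it is the step I expect to carry the real weight; parts (ii) and (iii) are essentially formal once the definitions are unwound.

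For part (i), I would argue by contraposition. Assume $a\neq b$; then $a\not\leq b$ or $b\not\leq a$, and by symmetry suppose $a\not\leq b$. The principal filter $\uparrow a$ and the principal ideal $\downarrow b$ are then disjoint (a common element $x$ would force $a\leq x\leq b$), so the prime filter theorem yields a prime filter $F$ with $a\in F$ and $b\notin F$. Its characteristic function $v=\chi_F$ belongs to $Hom(A,\{0,1\})$ and satisfies $v\models^* a$ but $v\not\models^* b$. This exhibits a $v$ for which ``$v\models^* a$ iff $v\models^* b$'' fails, establishing the contrapositive; hence if every $v$ agrees on $a$ and $b$, then $a=b$.

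For part (ii), I would simply unwind the meaning of $v_1\neq v_2$: two distinct maps must differ at some argument $a$, so $v_1(a)\neq v_2(a)$, and since both values lie in the two-element set $\{0,1\}$, one of them equals $1$ and the other $0$. Relabelling if necessary so that $v_1(a)=1$ and $v_2(a)=0$, we obtain $v_1\models^* a$ while $v_2\not\models^* a$, which is the desired separating element (and which is exactly the $T_0$-style separation of the associated system). Nothing deeper than the two-valuedness of the codomain is needed here.

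For part (iii), the key observation is that $p^*$ is in fact the identity map. By definition $p^*(v)(a)=1$ iff $v\models^* a$ iff $v(a)=1$, so $p^*(v)(a)=v(a)$ for every $a\in A$, i.e.\ $p^*(v)=v$. In particular $p^*(v)$ again lies in $Hom(A,\{0,1\})$, so $p^*$ is well defined, and being the identity it is trivially a bijection; if one prefers to avoid this shortcut, injectivity follows from part (ii) and surjectivity is witnessed by taking $v=w$ for any prescribed target $w$. Thus the whole lemma reduces to the single substantive point of part (i), namely that $\{0,1\}$-valued homomorphisms separate the elements of a distributive lattice.
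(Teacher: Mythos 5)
Your proposal is correct and substantially more explicit than the paper's own proof, which disposes of (i) in two lines (``$v(a)=1$ iff $v(b)=1$ for all $v$, hence $a=b$'') and dismisses (ii) and (iii) as routine checks. For (i) you follow the same route the authors implicitly take: the unstated content of their ``hence'' is precisely the prime filter separation theorem you invoke, so your contrapositive argument with $\uparrow a$, $\downarrow b$ and the characteristic function of a prime filter containing $a$ but missing $b$ is the missing justification rather than a different method. Your observation in (iii) that $p^*$ is literally the identity map on $Hom(A,\{0,1\})$ is also exactly right and is the cleanest possible way to get bijectivity.

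One point deserves a flag: your ``relabelling if necessary'' in (ii) is not an innocent step, because the statement being proved is asymmetric in $v_1$ and $v_2$ --- it demands an $a$ with $v_1\models^* a$ and $v_2\not\models^* a$ in that order, with $v_1,v_2$ given in advance. After relabelling you only obtain the symmetric separation: some $a$ satisfied by one of the two and not by the other. In fact the asymmetric statement as literally written is false, and the paper's own Figure 1 refutes it: in the three-element chain $0\leq a\leq 1$ the two homomorphisms satisfy $h_2\leq h_1$ with $h_2^{-1}(1)=\{1\}\subseteq\{1,a\}=h_1^{-1}(1)$, so for $v_1=h_2$ and $v_2=h_1$ every element satisfied by $v_1$ is also satisfied by $v_2$, and no separating $a$ of the required form exists. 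Thus your proof establishes the only true (symmetric) reading of (ii), which is all that injectivity of $p^*$ and the $T_0$-type properties in the paper actually require; the defect lies in the paper's formulation rather than in your argument, but you should say explicitly that the symmetrized statement is what you are proving.
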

\begin{proof}
(i) Let for any $a,b\in A$ and $v\in Hom(A,\{0,1\})$, $v\models^* a$ iff $v\models^*b$. So, $v(a)=1$ iff $v(b)=1$ for any $v\in Hom(A,\{0,1\})$. Hence $a=b$ can be concluded.
\\
Properties (ii) and (iii) can be verified by routine check.
\end{proof}
\begin{corollary}\label{cor2}
For any G\"{o}del algebra $A$, $(Hom(A,\{0,1\}),\models^*,A)$ have the following properties.\\
(i) if for any $a,b\in A$, $(v\models^* a$ iff $v\models^*b$ for any $v\in Hom(A,\{0,1\}))$ then $a=b$.\\
(ii) if $v_1\neq v_2$ then there exist $a\in A$ such that $v_1\models^* a$ but $v_2\not\models^*a$.
\end{corollary}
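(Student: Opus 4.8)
The plan is to observe that the corollary is nothing more than a specialization of Lemma~\ref{lemma2} to a narrower class of algebras, so almost no fresh work is required. By Definition~\ref{ga}, a G\"odel algebra is precisely a Heyting algebra that additionally satisfies the prelinearity identity $(a\rightarrow b)\vee(b\rightarrow a)=\mathbf{1}$. Thus every G\"odel algebra $A$ is, in particular, a Heyting algebra, and the triple $(Hom(A,\{0,1\}),\models^*,A)$ built from it is exactly an object of the form to which Lemma~\ref{lemma2} applies. (Indeed, that it is an $I$-topological system at all is already recorded in Corollary~\ref{cor1}.)

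With this inclusion in hand, I would simply invoke the two relevant clauses of Lemma~\ref{lemma2} with the same $A$. For part (i), Lemma~\ref{lemma2}(i) gives: if $v\models^* a$ iff $v\models^* b$ for every $v\in Hom(A,\{0,1\})$, then $a=b$; since $A$ is a Heyting algebra this conclusion is already available verbatim. For part (ii), Lemma~\ref{lemma2}(ii) gives: whenever $v_1\neq v_2$ there is some $a\in A$ with $v_1\models^* a$ but $v_2\not\models^* a$. No modification of either argument is needed, because the prelinearity hypothesis is an extra constraint on $A$ and can only shrink the class of admissible algebras, never invalidate a property that holds for all Heyting algebras.

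The reason there is essentially no obstacle is conceptual rather than computational: both statements concern only the \emph{separation} behaviour of bounded distributive lattice homomorphisms into $\{0,1\}$ — that homomorphisms distinguish distinct elements of $A$, and distinct homomorphisms are themselves distinguished by some element. These facts depend only on the distributive-lattice and implication structure already present in any Heyting algebra, and are untouched by imposing $(a\rightarrow b)\vee(b\rightarrow a)=\mathbf{1}$. The only thing one must be careful to state is that the reference to Lemma~\ref{lemma2} is legitimate, i.e.\ that a G\"odel algebra genuinely qualifies as a Heyting algebra, which is immediate from Definition~\ref{ga}. One could, if desired, reproduce the short proofs of Lemma~\ref{lemma2}(i) and (ii) line by line for the G\"odel case, but doing so would be entirely redundant, so I would instead present the corollary as a direct consequence of the lemma.
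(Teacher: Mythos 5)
Your proposal is correct and matches the paper's own (implicit) reasoning exactly: the paper states Corollary~\ref{cor2} immediately after Lemma~\ref{lemma2} with no separate argument, precisely because a G\"odel algebra is by Definition~\ref{ga} a Heyting algebra satisfying prelinearity, so the lemma's conclusions (i) and (ii) transfer verbatim. Your additional remark that prelinearity only restricts the class of algebras and cannot invalidate the lemma is a correct and sufficient justification.
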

\begin{lemma}\label{lemma3}
If $f:B\longrightarrow A$ is a Heyting homomorphism then $(\_\circ f,f):(Hom(A,\{0,1\}),\models^*,A)\longrightarrow (Hom(B,\{0,1\}),\models^*,B)$ is continuous.
\end{lemma}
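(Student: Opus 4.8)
The plan is to verify directly that the pair $(\_\circ f,\,f)$ fulfils the three clauses defining an $\mathbf{I-TopSys}$ morphism in Definition~\ref{Grftsy}, with source system $(Hom(A,\{0,1\}),\models^*,A)$ and target system $(Hom(B,\{0,1\}),\models^*,B)$. The first thing to pin down is the variance: the set map $f_1=\_\circ f$ runs in the same direction as the morphism, sending $v\in Hom(A,\{0,1\})$ to $v\circ f\in Hom(B,\{0,1\})$, whereas the algebra map $f_2=f$ runs contravariantly from $B$ to $A$.

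First I would confirm clause (i), that $f_1$ genuinely lands in $Hom(B,\{0,1\})$. For $v\in Hom(A,\{0,1\})$ I must check that $v\circ f$ is a bounded distributive lattice homomorphism $B\to\{0,1\}$. Since $f$ is a Heyting homomorphism it preserves $\wedge$, $\vee$ and $\mathbf{0}$, and it also preserves $\mathbf{1}$ because $\mathbf{1}=\mathbf{0}\rightarrow\mathbf{0}$ forces $f(\mathbf{1})=f(\mathbf{0})\rightarrow f(\mathbf{0})=\mathbf{0}\rightarrow\mathbf{0}=\mathbf{1}$; composing with the lattice homomorphism $v$ then yields a map preserving $\wedge$, $\vee$, $\mathbf{0}$ and $\mathbf{1}$. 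Clause (ii) is immediate, as $f_2=f$ is a Heyting homomorphism by hypothesis.

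The substance of the statement is clause (iii), the continuity condition $v\models^* f(b)$ iff $f_1(v)\models^* b$ for all $v\in Hom(A,\{0,1\})$ and $b\in B$. Here I would simply unfold $\models^*$ as defined in Lemma~\ref{lemma1}: the left-hand side says $v(f(b))=1$, the right-hand side says $(v\circ f)(b)=1$, and these are the same assertion by the definition of composition. Thus (iii) reduces to an identity.

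I do not anticipate a real obstacle; the only points that merit care are the bookkeeping of variance and the verification that $f$ preserves the top element, so that $f_1$ is well defined with the claimed codomain. Once the definitions are unwound, continuity holds on the nose.
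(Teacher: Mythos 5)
Your proof is correct and its core is exactly the paper's own argument: the paper's entire proof is the one-line unfolding of $\models^*$ in clause (iii), namely $v\models^* f(b)$ iff $v(f(b))=1$ iff $(v\circ f)(b)=1$ iff $(\_\circ f)(v)\models^* b$. Your additional checks (that $v\circ f$ lands in $Hom(B,\{0,1\})$, using $f(\mathbf{1})=f(\mathbf{0}\rightarrow\mathbf{0})=\mathbf{1}$ since Definition~\ref{hhm} does not list $\mathbf{1}$-preservation) are sound bookkeeping that the paper simply leaves implicit.
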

\begin{proof}
We have $v\models^* f(b)$ iff $v(f(b))=1$ iff $v\circ f(b)=1$ iff $(\_\circ f(v))(b)=1$ iff $\_\circ f(v)\models^* b$.
\end{proof}
\begin{definition}
$S$ is a functor from $\mathbf{HA^{op}}$ to $\mathbf{I-TopSys}$ defined as follows. $S$ acts on an object $A$ as $S(A)=(Hom(A,\{0,1\}),\models^*,A)$ and on a morphism $f$ as $S(f)=(\_\circ f,f)$ (it is indeed a functor follows from Lemma \ref{lemma1} and Lemma \ref{lemma3}).
\end{definition}
\begin{proposition}\label{proposition1}
$S$ is a functor from $\mathbf{HA^{op}}$ to $\mathbf{HI-TopSys}$.
\end{proposition}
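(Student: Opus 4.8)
The plan is to observe that most of the work is already in place: the definition immediately preceding this proposition, together with \rlemma{lemma1} and \rlemma{lemma3}, establishes that $S$ is a well-defined functor $\mathbf{HA^{op}}\to\mathbf{I-TopSys}$. Since $\mathbf{HI-TopSys}$ (c.f.\ \rdef{hits}) is the full subcategory of $\mathbf{I-TopSys}$ whose objects are precisely the Heyting algebraic I-topological systems, it suffices to check that every object $S(A)=(Hom(A,\{0,1\}),\models^*,A)$ is Heyting algebraic; the morphism assignment and the functoriality axioms then carry over verbatim from the already-verified $\mathbf{I-TopSys}$-valued case.

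First I would unwind the definition of the structure map $p^*$ attached to the system $S(A)$. By \rdef{HITopSys}, $p^*\colon Hom(A,\{0,1\})\to Hom(A,\{0,1\})$ is given by $p^*(v)(a)=1$ iff $v\models^* a$, and by the construction of $S$ we have $v\models^* a$ iff $v(a)=1$. Hence $p^*(v)(a)=1$ iff $v(a)=1$ for every $a\in A$, so $p^*(v)=v$; that is, $p^*$ is the identity map on $Hom(A,\{0,1\})$ and is in particular a bijection. This is exactly the bijectivity assertion recorded in \rlemma{lemma2}(iii), which I would cite directly. By \rdef{HITopSys} this means $S(A)$ is a Heyting algebraic I-topological system, hence an object of $\mathbf{HI-TopSys}$.

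It then remains only to note that, for a Heyting homomorphism $f\colon B\to A$, the pair $S(f)=(\_\circ f,f)$ is already a continuous $\mathbf{I-TopSys}$ morphism by \rlemma{lemma3}, and both its source and target are now known to be Heyting algebraic; since $\mathbf{HI-TopSys}$ is full, $S(f)$ is automatically a morphism of $\mathbf{HI-TopSys}$. Preservation of identities and of composition is inherited unchanged from the $\mathbf{I-TopSys}$-valued functor $S$. I do not anticipate a genuine obstacle here: the only nontrivial ingredient is the bijectivity of $p^*$, and this collapses to the observation that $p^*$ is literally the identity on $Hom(A,\{0,1\})$. The content of the proposition is therefore the corestriction of an already-constructed functor to a full subcategory, justified by \rlemma{lemma2}.
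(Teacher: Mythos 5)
Your proposal is correct and follows essentially the same route as the paper, whose entire proof is the citation ``Follows from Lemma~\ref{lemma1}, Lemma~\ref{lemma2} and Lemma~\ref{lemma3}.'' You additionally spell out the content of Lemma~\ref{lemma2}(iii) --- that $p^*$ collapses to the identity on $Hom(A,\{0,1\})$, hence is a bijection --- which the paper leaves as a ``routine check,'' so your write-up is a faithful (and slightly more explicit) expansion of the paper's argument.
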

\begin{proof}
Follows from Lemma \ref{lemma1}, Lemma \ref{lemma2} and Lemma \ref{lemma3}.
\end{proof}

\begin{definition}
$\mathscr{S}$ is a functor from $\mathbf{GA^{op}}$ to $\mathbf{GI-TopSys}$ defined as follows. $\mathscr{S}$ acts on an object $A$ as $\mathscr{S}(A)=(Hom(A,\{0,1\}),\models^*,A)$ and on a morphism $f$ as $\mathscr{S}(f)=(\_\circ f,f)$ (it is indeed a functor follows from Corollary \ref{cor1}, Corollary \ref{cor2} and Lemma \ref{lemma3}).
\end{definition}
\begin{theorem}\label{theorem1}
$\mathbf{HI-TopSys}$ is dually equivalent to $\mathbf{HA}$.
\end{theorem}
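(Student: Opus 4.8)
The plan is to show that $S$ and $H$ are mutually quasi-inverse functors between $\mathbf{HA^{op}}$ and $\mathbf{HI-TopSys}$, yielding an equivalence $\mathbf{HA^{op}}\simeq\mathbf{HI-TopSys}$; since such an equivalence is by definition a dual equivalence between $\mathbf{HA}$ and $\mathbf{HI-TopSys}$, the theorem follows. By Proposition \ref{proposition1}, $S$ is a functor $\mathbf{HA^{op}}\to\mathbf{HI-TopSys}$, and $H$ maps $\mathbf{HI-TopSys}$ into $\mathbf{HA^{op}}$. Thus it remains only to produce natural isomorphisms $H\circ S\cong\mathrm{id}_{\mathbf{HA^{op}}}$ and $S\circ H\cong\mathrm{id}_{\mathbf{HI-TopSys}}$. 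The first composite is the identity on the nose: for an object $A$ one has $H(S(A))=H((Hom(A,\{0,1\}),\models^*,A))=A$, and for a morphism $f$ one has $H(S(f))=H((\_\circ f,f))=f$, so no nontrivial transformation is needed here.

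For the composite $S\circ H$, observe that $(S\circ H)(X,\models,A)=S(A)=(Hom(A,\{0,1\}),\models^*,A)$. I would take the candidate unit to be $\eta_{(X,\models,A)}:=(p^*,id_A)$, where $p^*\colon X\to Hom(A,\{0,1\})$ is the map of Definition \ref{gftsy} and $id_A$ is the identity Heyting homomorphism. That this pair is an $\mathbf{I-TopSys}$ morphism is immediate: $id_A$ is a Heyting homomorphism, and the continuity requirement ``$x\models id_A(a)$ iff $p^*(x)\models^* a$'' holds because both sides assert $p^*(x)(a)=1$, directly from the definitions of $p^*$ and of $\models^*$. Because $(X,\models,A)$ is Heyting algebraic (Definition \ref{HITopSys}), the map $p^*$ is a bijection, so $\eta_{(X,\models,A)}$ admits the two-sided inverse $((p^*)^{-1},id_A)$, which is itself a morphism by the same continuity check, and hence $\eta_{(X,\models,A)}$ is an isomorphism in $\mathbf{HI-TopSys}$.

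It then remains to verify naturality of $\eta$. Given a morphism $(f_1,f_2)\colon(X,\models,A)\to(Y,\models',B)$, using the composition law $(g_1,g_2)\circ(f_1,f_2)=(g_1\circ f_1,f_2\circ g_2)$ of Definition \ref{Grftsy} one computes $(S\circ H)(f_1,f_2)\circ\eta_{(X,\models,A)}=((\_\circ f_2)\circ p^*_X,\,f_2)$ and $\eta_{(Y,\models',B)}\circ(f_1,f_2)=(p^*_Y\circ f_1,\,f_2)$. The second coordinates agree, and equality of the first coordinates amounts to $p^*_X(x)(f_2(b))=p^*_Y(f_1(x))(b)$ for all $x\in X$ and $b\in B$, i.e. to ``$x\models f_2(b)$ iff $f_1(x)\models' b$'', which is exactly continuity condition (iii) for $(f_1,f_2)$. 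Hence $\eta$ is a natural isomorphism, $S$ and $H$ constitute an equivalence $\mathbf{HA^{op}}\simeq\mathbf{HI-TopSys}$, and the asserted dual equivalence follows.

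The substantive content has in fact already been supplied by Lemmas \ref{lemma1}, \ref{lemma2}, \ref{lemma3} and Proposition \ref{proposition1}, which ensure that $S$ genuinely lands in $\mathbf{HI-TopSys}$ and is functorial; the steps above are essentially bookkeeping. I expect the only delicate point to be keeping the contravariant (op) directions of $H$, $S$, and the algebra-components $f_2$ mutually consistent, since a mismatch there is precisely what would derail the naturality square; once the direction conventions are fixed, every verification collapses to the definition of $p^*$ together with the morphism continuity condition.
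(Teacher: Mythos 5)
Your proposal is correct and takes essentially the same route as the paper: the paper's proof exhibits the adjunction $H\dashv S$ with unit $\eta=(p^*,id_A)$ and counit $\xi=id_A$ and then asserts both are natural isomorphisms, which is precisely your quasi-inverse data, and your naturality computation is the same calculation as the paper's verification that $f_1=(\_\circ f_2)\circ p^*$ via continuity condition (iii). If anything, you make explicit the step the paper dismisses as ``easily shown'' (that bijectivity of $p^*$, i.e.\ the Heyting-algebraic condition of Definition~\ref{HITopSys}, makes $\eta$ invertible in $\mathbf{HI-TopSys}$), and by citing Definition~\ref{gftsy} for $p^*$ landing in $Hom(A,\{0,1\})$ you sidestep the paper's superfluous (and in fact questionable) side claim that each $p_x$ also preserves $\rightarrow$.
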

\begin{proof}
First we will prove that $H$ is the left adjoint to the functor $S$ by presenting the unit of the adjunction.

Recall that  $S(A)=(Hom(A,\{0,1\}),\models^*,A)$ where $v\models^* a$ iff $v(a)=1$ and $H((X,\models,A))=A$.

Hence $S(H((X,\models, A)))=(Hom(A,\{0,1\}),\models^*, A)$.
\begin{center}
\begin{tabular}{ l | r  } 
 $\mathbf{HI-TopSys}$ & $\mathbf{HA^{op}}$ \\
\hline
 {\begin{tikzpicture}[description/.style={fill=white,inner sep=2pt}] 
    \matrix (m) [matrix of math nodes, row sep=2.5em, column sep=2.5em]
    {(X,\models ,A) & &S(H((X, \models ,A)))  \\
        & & S(B) \\ }; 
    \path[->,font=\scriptsize] 
        (m-1-1) edge node[auto] {$\eta$} (m-1-3)
        (m-1-1) edge node[auto,swap] {$f(\equiv(f_1,f_2))$} (m-2-3)
        (m-1-3) edge node[auto] {$S\hat{f}$} (m-2-3);
\end{tikzpicture}} &  {\begin{tikzpicture}[description/.style={fill=white,inner sep=2pt}] 
    \matrix (m) [matrix of math nodes, row sep=2.5em, column sep=2.5em]
    {& &H((X,\models ,A))  \\
        & &B \\ }; 
    \path[->,font=\scriptsize]

        (m-1-3) edge node[auto] {$\hat{f}(\equiv f_2)$} (m-2-3);
\end{tikzpicture}}\\
\end{tabular}
\end{center}
Then unit is defined by $\eta =(p^*,id_A)$.\\
 \begin{tikzpicture}[description/.style={fill=white,inner sep=2pt}] 
    \matrix (m) [matrix of math nodes, row sep=2.5em, column sep=2.5em]
    {i.e.\ (X,\models ,A) & &S(H((X, \models ,A)))  \\
        }; 
    \path[->,font=\scriptsize] 
        (m-1-1) edge node[auto] {$\eta$} (m-1-3)
        (m-1-1) edge node[auto,swap] {$(p^*,id_A)$} (m-1-3)
        ;
\end{tikzpicture}

where $\\p^*:X\longrightarrow Hom(A,\{0,1\})$,

$x\longmapsto p_x:A\longrightarrow \{0,1\}$
such that $p_x(a)=1$ iff $x\models a$.

If possible let $p_x(\mathbf{0})=1$. Then we have $x\models 0$, which is a contradiction as $x\models 0$ for no $x\in X$. Hence $p_x(\mathbf{0})=0$. Also we have $p_x(a_1\wedge a_2)=1$ iff $x\models a_1\wedge a_2$ iff $x\models a_1$ and $x\models a_2$ iff $p_x(a_1)=1$ and $p_x(a_2)=1$ iff $p_x(a_1)\wedge p_x(a_2)=1$. Similarly it can be shown that $p_x(a_1\vee a_2)=p_x(a_1)\vee p_x(a_2)$ and $p_x(a_1\rightarrow a_2)=p_x(a_1)\rightarrow p_x(a_2)$. Hence for each $x\in X,$ $p_x:A\longrightarrow \{0,1\}$ is a Heyting homomorphism.

It may be observed that $x\models id_A(a)$ iff $x\models a$ iff $p_x(a)=1$ iff $(p^*(x))(a)=1$ iff $p^*(x)\models^* a$. Consequently we can conclude that  $(p^*,id_A):(X,\models,A)\longrightarrow S(H((X,\models,A))) $ is a continuous map of Heyting algebraic I-topological system.

Let us define $\hat{f}$ as follows:
$(f_1,f_2):(X,\models ,A)\longrightarrow (Hom(B,\{0,1\}),\models_*,B)$
\\then $\hat{f}=f_2$.
Recall that $S(\hat{f})=(\_\circ f_2,f_2)$.

It suffices to show that the triangle on the left commute, i.e., $(f_1,f_2)=S(\hat{f})\circ \eta$. Now, $S(\hat{f})\circ \eta=(\_\circ f_2,f_2)\circ (p^*,id_A)=((\_\circ f_2)\circ p^*,id_A\circ f_2)=((\_\circ f_2)\circ p^*, f_2)$. For any $x\in X,$ $f_1(x)=(\_\circ f_2)\circ p^*(x)=(\_\circ f_2)\circ p_x=p_x\circ f_2$. Consequently, for all $b\in B$, $f_1(x)(b)=1$ iff $f_1(x)\models^* b$ iff $x\models f_2(b)$ iff $p_x(f_2(b))=1$ iff $(p_x\circ f_2)(b)=1$ iff $((\_\circ f_2)\circ p_x)(b)=1$ iff $((\_\circ f_2)\circ p^*)(x)(b)=1$. Therefore $f_1=(\_\circ f_2)\circ p^*$. Hence $\eta(\equiv(p^*,id_A)):(X,\models ,A)\longrightarrow S(H((X,\models ,A)))$ is the unit, consequently $H$ is the left adjoint to the functor $S$.

Diagram of the co-unit of the above adjunction is as follows.

\begin{center}
\begin{tabular}{ l | r  } 
 $\mathbf{HA^{op}}$ & $\mathbf{HI-TopSys}$ \\
\hline
 {\begin{tikzpicture}[description/.style={fill=white,inner sep=2pt}] 
    \matrix (m) [matrix of math nodes, row sep=2.5em, column sep=2.5em]
    {H(S(A)) & & A  \\
         H((Y,\models ,B)) \\ }; 
    \path[->,font=\scriptsize] 
        (m-1-1) edge node[auto] {$\xi(\equiv id_A)$} (m-1-3)
        (m-2-1) edge node[auto] {$f$} (m-1-1)
        (m-2-1) edge node[auto,swap] {$H\hat{f}$} (m-1-3);
\end{tikzpicture}} &  {\begin{tikzpicture}[description/.style={fill=white,inner sep=2pt}] 
    \matrix (m) [matrix of math nodes, row sep=2.5em, column sep=2.5em]
    {& &S(A) \\
        & &(Y,\models,B) \\ }; 
    \path[->,font=\scriptsize]

        (m-2-3) edge node[auto,swap] {$\hat{f}(\equiv\_\circ f)$} (m-1-3);
\end{tikzpicture}}\\
\end{tabular}
\end{center}
From the construction it can be easily shown that $\xi$ and $\eta$ are natural isomorphism and hence the theorem holds.
\end{proof}
\begin{corollary}\label{CA}
There exist adjoint functors between $\mathbf{HA^{op}}$ and $\mathbf{I-TopSys}$.
\end{corollary}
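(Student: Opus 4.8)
The plan is to observe that the adjunction established in the first half of the proof of \rth{theorem1} already works for \emph{arbitrary} I-topological systems, and that only the isomorphism part of that argument actually used the Heyting algebraic (bijectivity) hypothesis. Concretely, the assignment defining the functor $H$ extends verbatim to a functor $\tilde H:\mathbf{I-TopSys}\to\mathbf{HA^{op}}$ by $\tilde H((X,\models,A))=A$ and $\tilde H((f_1,f_2))=f_2$; functoriality is immediate from the composition rule $(g_1,g_2)\circ(f_1,f_2)=(g_1\circ f_1,f_2\circ g_2)$ of $\mathbf{I-TopSys}$. I would then show that $\tilde H$ is left adjoint to $S:\mathbf{HA^{op}}\to\mathbf{I-TopSys}$, which by \rdef{2.3} reduces to producing, for each object $(X,\models,A)$, a unit morphism with the requisite universal property.

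For the unit I would take the same map as before, $\eta_{(X,\models,A)}=(p^*,id_A):(X,\models,A)\to S(\tilde H((X,\models,A)))=(Hom(A,\{0,1\}),\models^*,A)$, where $p^*(x)=p_x$ and $p_x(a)=1$ iff $x\models a$. The essential point, and the reason the statement holds for all I-topological systems and not merely the Heyting algebraic ones, is that each $p_x$ automatically belongs to $Hom(A,\{0,1\})$: clauses (1)--(3) of \rdef{gftsy} together with \rprop{prop1} give $p_x(\mathbf 0)=0$, $p_x(\mathbf 1)=1$, and preservation of $\wedge$ and $\vee$, i.e.\ $p_x$ is a bounded distributive lattice homomorphism, which is exactly the membership condition for $Hom(A,\{0,1\})$. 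This never requires $p^*$ to be injective or surjective, so \rdef{HITopSys} is not invoked. That $(p^*,id_A)$ is an $\mathbf{I-TopSys}$-morphism then follows from the continuity check $x\models id_A(a)$ iff $p_x(a)=1$ iff $p^*(x)\models^* a$.

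Next I would verify the universal property of $\eta$, transcribing the corresponding computation from \rth{theorem1}. Given any Heyting algebra $B$ and any $\mathbf{I-TopSys}$-morphism $f=(f_1,f_2):(X,\models,A)\to S(B)$, I set $\hat f:=f_2$, regarded as an $\mathbf{HA^{op}}$-morphism $A\to B$. Using the composition rule one gets $S(\hat f)\circ\eta=((\_\circ f_2)\circ p^*,\,f_2)$, whose second coordinate is already $f_2$; the morphism condition (iii) for $f$ yields $f_1(x)=p_x\circ f_2=((\_\circ f_2)\circ p^*)(x)$ for every $x\in X$, so the first coordinates also coincide and $S(\hat f)\circ\eta=f$. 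Uniqueness is forced because the $\mathbf{HA^{op}}$-coordinate of $S(\hat f)\circ\eta$ equals $\hat f$ itself, which must therefore be $f_2$. This exhibits $\eta$ as a unit and shows that $\tilde H$ is left adjoint to $S$, giving the asserted adjoint functors between $\mathbf{HA^{op}}$ and $\mathbf{I-TopSys}$.

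I expect the only genuinely delicate point to be the verification that $p_x$ lands in $Hom(A,\{0,1\})$ using solely the lattice-theoretic clauses of \rdef{gftsy}, together with the bookkeeping needed to confirm that the universal-property computation nowhere secretly relies on bijectivity; everything else is a direct copy of the Heyting algebraic case. What is lost relative to \rth{theorem1} is exactly that $p^*$ need no longer be bijective, so $\eta$ is in general only a unit rather than a natural isomorphism, and one obtains an adjunction instead of a dual equivalence.
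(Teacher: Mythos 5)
Your proposal is correct and takes essentially the same route as the paper: \rcor{CA} is stated there as an immediate byproduct of the adjunction half of the proof of \rth{theorem1}, whose unit $(p^*,id_A)$ and universal-property computation never use bijectivity of $p^*$ --- precisely the observation you make explicit, with bijectivity needed only to upgrade the unit and co-unit to natural isomorphisms. If anything, your writeup is more careful than the paper's, since you correctly note that membership of $p_x$ in $Hom(A,\{0,1\})$ requires only that it be a bounded distributive lattice homomorphism (which follows from clauses (1)--(3) of \rdef{gftsy} and \rprop{prop1}), whereas the paper's proof of \rth{theorem1} asserts that $p_x$ preserves $\rightarrow$, a claim that fails in general (the paper's own example with $h_2$ shows a lattice homomorphism in $Hom(A,\{0,1\})$ that is not a Heyting homomorphism) and is fortunately never needed.
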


\begin{theorem}\label{ED}
There exist adjoint functors between  $\mathbf{ESA}$ and $\mathbf{HA^{op}}$.
\end{theorem}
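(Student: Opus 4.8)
The plan is to derive the statement directly from the Esakia duality of Theorem~\ref{ESD}, by recognising that a dual equivalence between $\mathbf{HA}$ and $\mathbf{ESA}$ is the same datum as an ordinary (covariant) equivalence between $\mathbf{HA}^{op}$ and $\mathbf{ESA}$, and then invoking the general categorical fact that every equivalence of categories is part of an adjoint equivalence. First I would fix the two contravariant functors witnessing Esakia duality: the functor $\mathcal{E}\colon\mathbf{HA}\to\mathbf{ESA}$ sending a Heyting algebra $A$ to its space of prime filters equipped with the Priestley (Esakia) topology and inclusion order, and the functor $\mathcal{C}\colon\mathbf{ESA}\to\mathbf{HA}$ sending an Esakia space $(X,\le,\tau)$ to the Heyting algebra of its clopen up-sets. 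Theorem~\ref{ESD} asserts precisely that these are mutually quasi-inverse up to natural isomorphism, that is, $\mathcal{C}\circ\mathcal{E}\cong\mathrm{id}_{\mathbf{HA}}$ and $\mathcal{E}\circ\mathcal{C}\cong\mathrm{id}_{\mathbf{ESA}}$.

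Next I would reinterpret these contravariant functors as covariant functors to and from the opposite category. Writing $\Phi\colon\mathbf{HA}^{op}\to\mathbf{ESA}$ for $\mathcal{E}$ read covariantly and $\Psi\colon\mathbf{ESA}\to\mathbf{HA}^{op}$ for $\mathcal{C}$ read covariantly, the natural isomorphisms above become $\Psi\circ\Phi\cong\mathrm{id}_{\mathbf{HA}^{op}}$ and $\Phi\circ\Psi\cong\mathrm{id}_{\mathbf{ESA}}$. Hence $\Phi$ and $\Psi$ constitute an equivalence of categories between $\mathbf{HA}^{op}$ and $\mathbf{ESA}$.

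Finally I would invoke the standard result that any equivalence of categories can be upgraded to an adjoint equivalence: given the quasi-inverse pair $(\Phi,\Psi)$ together with natural isomorphisms $\eta\colon\mathrm{id}_{\mathbf{HA}^{op}}\to\Psi\Phi$ and $\varepsilon\colon\Phi\Psi\to\mathrm{id}_{\mathbf{ESA}}$, one may (if necessary) replace $\varepsilon$ by a corrected isomorphism so that the triangle identities hold, thereby exhibiting $\Phi\dashv\Psi$ with unit $\eta$ and counit $\varepsilon$ (and, symmetrically, $\Psi\dashv\Phi$). This produces adjoint functors between $\mathbf{ESA}$ and $\mathbf{HA}^{op}$, as required. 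In the spirit of the proof of Theorem~\ref{theorem1}, one may alternatively present the adjunction concretely, taking the unit and counit to be the evaluation-type natural isomorphisms already supplied by Esakia duality and verifying the couniversal property directly.

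I expect the only genuinely delicate point to be the bookkeeping of variance: one must be consistent about how a Heyting homomorphism is converted into an Esakia morphism and conversely, so that the composites really return to the identities rather than to some twisted functor, and so that the coherence (triangle) identities survive the reinterpretation. Once the duality of Theorem~\ref{ESD} is in hand, no substantive new computation is needed; the entire content is the passage \emph{dual equivalence $=$ equivalence with the opposite category $\Rightarrow$ adjoint equivalence}.
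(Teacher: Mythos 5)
Your proposal is correct and follows essentially the same route as the paper, whose entire proof is the one-line observation that the statement ``follows from Theorem~\ref{ESD}''. What you have written is simply a careful expansion of that implicit reasoning: dual equivalence $=$ covariant equivalence with $\mathbf{HA}^{op}$, and every equivalence upgrades to an adjoint equivalence, hence adjoint functors exist.
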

\begin{proof}
Follows from Theorem \ref{ESD}.
\end{proof}
\begin{theorem}
There exist adjoint functors between $\mathbf{ESA}$ and $\mathbf{I-Top Sys}$.
\end{theorem}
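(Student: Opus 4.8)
The plan is to obtain the desired adjunction for free by composing the two adjunctions already in hand, using the standard fact that adjoint functors compose.

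First I would fix notation for the two given adjoint pairs so that their middle category matches. From \rth{ED} we have a pair of adjoint functors between $\mathbf{ESA}$ and $\mathbf{HA^{op}}$; since this adjunction is induced by the dual equivalence of \rth{ESD}, it is in fact an adjoint equivalence. Write $\Phi:\mathbf{ESA}\to\mathbf{HA^{op}}$ and $\Psi:\mathbf{HA^{op}}\to\mathbf{ESA}$ for the two functors, with $\Phi\dashv\Psi$ and unit and counit that are natural isomorphisms. From \rcor{CA} we have adjoint functors between $\mathbf{HA^{op}}$ and $\mathbf{I-TopSys}$; writing $S:\mathbf{HA^{op}}\to\mathbf{I-TopSys}$ for the functor used in \rprop{proposition1} and $H$ for its adjoint partner, we have the adjunction $H\dashv S$ supplied through \rth{theorem1} and \rcor{CA}.

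Next I would invoke the composition-of-adjunctions lemma: if $F\dashv G$ with $F:\mathbb{C}\to\mathbb{D}$, $G:\mathbb{D}\to\mathbb{C}$ and $F'\dashv G'$ with $F':\mathbb{D}\to\mathbb{E}$, $G':\mathbb{E}\to\mathbb{D}$, then $F'\circ F\dashv G\circ G'$. The unit of the composite is obtained by whiskering the two given units and the counit by whiskering the two given counits, and the triangle identities for the composite are a formal consequence of those of the two factors. Applying this to the pairs above, the functors $S\circ\Phi:\mathbf{ESA}\to\mathbf{I-TopSys}$ and $\Psi\circ H:\mathbf{I-TopSys}\to\mathbf{ESA}$ form an adjoint pair, which is exactly the assertion of the theorem.

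The only delicate point is the bookkeeping of variance: both adjunctions pass through $\mathbf{HA^{op}}$, so one must check that the codomain of the first pair really coincides with the domain of the second and that ``left'' and ``right'' are tracked consistently through the opposite category. The main, and essentially only, obstacle is therefore this direction-matching; once the functors are lined up correctly, the conclusion is a purely formal consequence of composing adjoints, and no further computation with the explicit descriptions of $\Phi$, $\Psi$, $S$, or $H$ is required.
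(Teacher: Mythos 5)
Your proposal is correct and follows exactly the paper's route: the paper's proof simply cites Corollary~\ref{CA} and Theorem~\ref{ED} and implicitly composes the two adjunctions, which is precisely the composition-of-adjoints argument you spell out. Your version just makes explicit the whiskering of units/counits and the variance bookkeeping through $\mathbf{HA^{op}}$ that the paper leaves unstated.
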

\begin{proof}
Follows from Corollary \ref{CA} and Theorem \ref{ED}.
\end{proof}
\begin{theorem}
Category $\mathbf{HI-TopSys}$ is equivalent to $\mathbf{ESA}$.
\end{theorem}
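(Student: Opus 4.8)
The plan is to obtain the desired equivalence by composing the two dual equivalences already at our disposal, namely the dual equivalence $\mathbf{HI-TopSys}\simeq\mathbf{HA}$ established in \rth{theorem1} and the Esakia dual equivalence $\mathbf{HA}\simeq\mathbf{ESA}$ recalled in \rth{ESD}. The guiding principle is elementary but must be handled with care as regards variance: the composite of two contravariant equivalences of categories is a covariant equivalence. Since each of the two dualities reverses the direction of arrows, chaining them restores covariance and yields an honest (covariant) equivalence $\mathbf{HI-TopSys}\simeq\mathbf{ESA}$.

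First I would make the functors explicit. From \rth{theorem1} the dual equivalence is witnessed by the contravariant functors $H\colon\mathbf{HI-TopSys}\to\mathbf{HA}$ and $S\colon\mathbf{HA}\to\mathbf{HI-TopSys}$ (displayed there as functors to and from $\mathbf{HA^{op}}$), together with the unit $\eta=(p^*,id_A)$ and the co-unit $\xi\equiv id_A$, both of which were shown to be natural isomorphisms; in particular $SH\cong id_{\mathbf{HI-TopSys}}$ via $\eta$ and $HS\cong id_{\mathbf{HA}}$ via $\xi$. From \rth{ESD} let $\mathcal{E}\colon\mathbf{HA}\to\mathbf{ESA}$ and $\mathcal{E}^{-1}\colon\mathbf{ESA}\to\mathbf{HA}$ denote the contravariant functors realising Esakia duality, with natural isomorphisms $\mathcal{E}^{-1}\mathcal{E}\cong id_{\mathbf{HA}}$ and $\mathcal{E}\,\mathcal{E}^{-1}\cong id_{\mathbf{ESA}}$. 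I would then define the candidate equivalence by $\Theta:=\mathcal{E}\circ H\colon\mathbf{HI-TopSys}\to\mathbf{ESA}$ and its proposed quasi-inverse by $\Theta':=S\circ\mathcal{E}^{-1}\colon\mathbf{ESA}\to\mathbf{HI-TopSys}$. As $H$ and $\mathcal{E}$ are each contravariant, $\Theta$ is covariant, and likewise $\Theta'$ is covariant.

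To verify $\Theta'\circ\Theta\cong id_{\mathbf{HI-TopSys}}$ I would compute $\Theta'\Theta=S(\mathcal{E}^{-1}\mathcal{E})H\cong S\,id_{\mathbf{HA}}\,H=SH\cong id_{\mathbf{HI-TopSys}}$, where the first isomorphism is induced by the Esakia isomorphism $\mathcal{E}^{-1}\mathcal{E}\cong id_{\mathbf{HA}}$ whiskered by $S$ on the left and $H$ on the right, and the second comes from the unit $\eta$ of \rth{theorem1}. The reverse composite $\Theta\Theta'=\mathcal{E}(HS)\mathcal{E}^{-1}\cong\mathcal{E}\,id_{\mathbf{HA}}\,\mathcal{E}^{-1}=\mathcal{E}\,\mathcal{E}^{-1}\cong id_{\mathbf{ESA}}$ is treated symmetrically, using the co-unit $\xi$ together with $\mathcal{E}\,\mathcal{E}^{-1}\cong id_{\mathbf{ESA}}$.

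The only genuinely delicate point is the bookkeeping of variance: one must check that, after reversing arrows twice, the whiskered natural transformations remain natural \emph{isomorphisms} in the covariant composite, and that the coherence (triangle) identities of the two dual equivalences assemble consistently. This is purely formal and uses no property special to Heyting algebras, G\"odel algebras, or the topological systems beyond the two cited equivalences, so I expect no substantive obstacle—only the need to state the composition-of-dualities argument cleanly and to track the directions of $\eta$, $\xi$, and the Esakia unit and co-unit accurately.
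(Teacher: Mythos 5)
Your proposal is correct and follows exactly the route the paper takes: its proof of this theorem is the one-line observation that the result ``follows from Theorem~\ref{theorem1} and Theorem~\ref{ESD},'' i.e., composing the dual equivalence $\mathbf{HI\text{-}TopSys}\simeq\mathbf{HA^{op}}$ with Esakia duality $\mathbf{HA^{op}}\simeq\mathbf{ESA}$ to get a covariant equivalence. Your write-up merely makes explicit the variance bookkeeping and whiskering that the paper leaves implicit, so there is no substantive difference.
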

\begin{proof}
Follows from Theorem \ref{theorem1} and Theorem \ref{ESD}.
\end{proof}
\begin{theorem}\label{GIGA}
$\mathbf{GI-TopSys}$ is dually equivalent to $\mathbf{GA}$.
\end{theorem}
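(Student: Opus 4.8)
The plan is to mirror the proof of \rth{theorem1} almost verbatim, replacing the Heyting-level functors $H$ and $S$ by their G\"{o}del-level analogues $\mathscr{G}:\mathbf{GI-TopSys}\longrightarrow\mathbf{GA^{op}}$ and $\mathscr{S}:\mathbf{GA^{op}}\longrightarrow\mathbf{GI-TopSys}$ constructed above. First I would recall that for a G\"{o}del algebra $A$ we have $\mathscr{S}(A)=(Hom(A,\{0,1\}),\models^*,A)$ and $\mathscr{G}((X,\models,A))=A$, so that $\mathscr{S}(\mathscr{G}((X,\models,A)))=(Hom(A,\{0,1\}),\models^*,A)$. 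I would then exhibit the unit $\eta=(p^*,id_A)$ of an adjunction, where $p^*:X\longrightarrow Hom(A,\{0,1\})$ sends $x$ to the map $p_x$ with $p_x(a)=1$ iff $x\models a$, and show that $\mathscr{G}$ is left adjoint to $\mathscr{S}$.

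The key steps are exactly those of \rth{theorem1}. Using the defining clauses of a G\"{o}del algebraic I-topological system (\rdef{GITopSys}), one checks that each $p_x$ is a Heyting homomorphism $A\longrightarrow\{0,1\}$, both being G\"{o}del algebras since the two-element chain satisfies prelinearity, and that $(p^*,id_A)$ satisfies the continuity condition $x\models id_A(a)$ iff $p^*(x)\models^* a$, hence is a genuine $\mathbf{I-TopSys}$ morphism. Given any morphism $(f_1,f_2):(X,\models,A)\longrightarrow\mathscr{S}(B)$ one puts $\hat f=f_2$ and verifies, precisely as in the Heyting case, that $f_1=(\_\circ f_2)\circ p^*$, so that $\mathscr{S}(\hat f)\circ\eta=(f_1,f_2)$ and $\hat f$ is the unique such $\mathbf{GA^{op}}$-morphism; this establishes the adjunction, with counit $\xi=id_A$.

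The point where the G\"{o}del hypothesis is genuinely used, and the only place the argument differs substantively from \rth{theorem1}, is in upgrading the unit and counit to natural \emph{isomorphisms}. Here I would invoke that $\mathscr{S}$ really lands in $\mathbf{GI-TopSys}$: by \rcor{cor1} the triple $\mathscr{S}(A)$ is an I-topological system, and by the G\"{o}del analogue of \rlemma{lemma2}(iii) recorded in \rcor{cor2} its structure map $p^*$ is a bijection, so $\mathscr{S}(A)$ meets clause (5) of \rdef{GITopSys} and is a legitimate object of $\mathbf{GI-TopSys}$. Dually, every object $(X,\models,A)$ of $\mathbf{GI-TopSys}$ is by definition G\"{o}del algebraic, so its own $p^*$ is bijective; since $\eta=(p^*,id_A)$ is then a continuous morphism with both components bijective, it is an isomorphism, naturality being immediate from the construction, while $\xi=id_A$ is trivially a natural isomorphism. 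Both $\eta$ and $\xi$ being natural isomorphisms yields the dual equivalence $\mathbf{GI-TopSys}\simeq\mathbf{GA}$.

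I expect the main obstacle to be purely bookkeeping rather than conceptual: one must confirm that every construction used in \rth{theorem1} restricts cleanly to the G\"{o}del setting, i.e. that prelinearity is preserved throughout so that $A$, $\{0,1\}$, and $Hom(A,\{0,1\})$ all remain inside $\mathbf{GA}$ and $\mathbf{GI-TopSys}$, and in particular that the bijectivity of $p^*$ from \rcor{cor1} and \rcor{cor2} does the work that \rlemma{lemma2} did in the Heyting proof. Once that bijectivity is in hand, the dual equivalence follows exactly as before.
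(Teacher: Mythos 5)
Your proposal is correct and follows essentially the same route as the paper: the paper's proof of this theorem is a near-verbatim repetition of the proof of Theorem~\ref{theorem1} with $H,S$ replaced by $\mathscr{G},\mathscr{S}$, exhibiting the unit $\eta=(p^*,id_A)$ and counit $\xi=id_A$ and asserting they are natural isomorphisms. In fact you supply more detail than the paper does on why $\eta$ is an isomorphism (bijectivity of $p^*$ via clause (5) of Definition~\ref{GITopSys}); just note the small citation slip that Corollary~\ref{cor2} as stated records only properties (i) and (ii), so the G\"{o}del analogue of Lemma~\ref{lemma2}(iii) is used implicitly rather than literally ``recorded'' there.
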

\begin{proof}
First we will prove that $\mathscr{G}$ is the left adjoint to the functor $\mathscr{S}$ by presenting the unit of the adjunction.

Recall that  $\mathscr{S}(A)=(Hom(A,\{0,1\}),\models^*,A)$ where $v\models^* a$ iff $v(a)=1$ and $\mathscr{G}((X,\models,A))=A$.

Hence $\mathscr{S}(\mathscr{G}((X,\models, A)))=(Hom(A,\{0,1\}),\models^*, A)$.
\begin{center}
\begin{tabular}{ l | r  } 
 $\mathbf{GI-TopSys}$ & $\mathbf{GA^{op}}$ \\
\hline
 {\begin{tikzpicture}[description/.style={fill=white,inner sep=2pt}] 
    \matrix (m) [matrix of math nodes, row sep=2.5em, column sep=2.5em]
    {(X,\models ,A) & &\mathscr{S}(\mathscr{G}((X, \models ,A)))  \\
        & & \mathscr{S}(B) \\ }; 
    \path[->,font=\scriptsize] 
        (m-1-1) edge node[auto] {$\eta$} (m-1-3)
        (m-1-1) edge node[auto,swap] {$f(\equiv(f_1,f_2))$} (m-2-3)
        (m-1-3) edge node[auto] {$\mathscr{S}\hat{f}$} (m-2-3);
\end{tikzpicture}} &  {\begin{tikzpicture}[description/.style={fill=white,inner sep=2pt}] 
    \matrix (m) [matrix of math nodes, row sep=2.5em, column sep=2.5em]
    {& &\mathscr{G}((X,\models ,A))  \\
        & &B \\ }; 
    \path[->,font=\scriptsize]

        (m-1-3) edge node[auto] {$\hat{f}(\equiv f_2)$} (m-2-3);
\end{tikzpicture}}\\
\end{tabular}
\end{center}
Then unit is defined by $\eta =(p^*,id_A)$.\\
 \begin{tikzpicture}[description/.style={fill=white,inner sep=2pt}] 
    \matrix (m) [matrix of math nodes, row sep=2.5em, column sep=2.5em]
    {i.e.\ (X,\models ,A) & &\mathscr{S}(\mathscr{G}((X, \models ,A)))  \\
        }; 
    \path[->,font=\scriptsize] 
        (m-1-1) edge node[auto] {$\eta$} (m-1-3)
        (m-1-1) edge node[auto,swap] {$(p^*,id_A)$} (m-1-3)
        ;
\end{tikzpicture}

where $\\p^*:X\longrightarrow Hom(A,\{0,1\})$,

$x\longmapsto p_x:A\longrightarrow \{0,1\}$
such that $p_x(a)=1$ iff $x\models a$.

If possible let $p_x(\mathbf{0})=1$. Then we have $x\models 0$, which is a contradiction as $x\models 0$ for no $x\in X$. Hence $p_x(\mathbf{0})=0$. Also we have $p_x(a_1\wedge a_2)=1$ iff $x\models a_1\wedge a_2$ iff $x\models a_1$ and $x\models a_2$ iff $p_x(a_1)=1$ and $p_x(a_2)=1$ iff $p_x(a_1)\wedge p_x(a_2)=1$. Similarly it can be shown that $p_x(a_1\vee a_2)=p_x(a_1)\vee p_x(a_2)$ and $p_x(a_1\rightarrow a_2)=p_x(a_1)\rightarrow p_x(a_2)$. Hence for each $x\in X,$ $p_x:A\longrightarrow \{0,1\}$ is a Heyting homomorphism.

It may be observed that $x\models id_A(a)$ iff $x\models a$ iff $p_x(a)=1$ iff $(p^*(x))(a)=1$ iff $p^*(x)\models^* a$. Consequently we can conclude that  $(p^*,id_A):(X,\models,A)\longrightarrow \mathscr{S}(\mathscr{G}((X,\models,A))) $ is a continuous map of G$\ddot{o}$del algebraic I-topological system.

Let us define $\hat{f}$ as follows:
$(f_1,f_2):(X,\models ,A)\longrightarrow (Hom(B,\{0,1\}),\models_*,B)$
\\then $\hat{f}=f_2$.
Recall that $\mathscr{S}(\hat{f})=(\_\circ f_2,f_2)$.

It suffices to show that the triangle on the left commute, i.e., $(f_1,f_2)=\mathscr{S}(\hat{f})\circ \eta$. Now, $\mathscr{S}(\hat{f})\circ \eta=(\_\circ f_2,f_2)\circ (p^*,id_A)=((\_\circ f_2)\circ p^*,id_A\circ f_2)=((\_\circ f_2)\circ p^*, f_2)$. For any $x\in X,$ $f_1(x)=(\_\circ f_2)\circ p^*(x)=(\_\circ f_2)\circ p_x=p_x\circ f_2$. Consequently, for all $b\in B$, $f_1(x)(b)=1$ iff $f_1(x)\models^* b$ iff $x\models f_2(b)$ iff $p_x(f_2(b))=1$ iff $(p_x\circ f_2)(b)=1$ iff $((\_\circ f_2)\circ p_x)(b)=1$ iff $((\_\circ f_2)\circ p^*)(x)(b)=1$. Therefore $f_1=(\_\circ f_2)\circ p^*$. Hence $\eta(\equiv(p^*,id_A)):(X,\models ,A)\longrightarrow \mathscr{S}(\mathscr{G}((X,\models ,A)))$ is the unit, consequently $\mathscr{G}$ is the left adjoint to the functor $\mathscr{S}$.

Diagram of the co-unit of the above adjunction is as follows.

\begin{center}
\begin{tabular}{ l | r  } 
 $\mathbf{GA^{op}}$ & $\mathbf{GI-TopSys}$ \\
\hline
 {\begin{tikzpicture}[description/.style={fill=white,inner sep=2pt}] 
    \matrix (m) [matrix of math nodes, row sep=2.5em, column sep=2.5em]
    {\mathscr{G}(\mathscr{S}(A)) & & A  \\
         \mathscr{G}((Y,\models ,B)) \\ }; 
    \path[->,font=\scriptsize] 
        (m-1-1) edge node[auto] {$\xi(\equiv id_A)$} (m-1-3)
        (m-2-1) edge node[auto] {$f$} (m-1-1)
        (m-2-1) edge node[auto,swap] {$\mathscr{G}\hat{f}$} (m-1-3);
\end{tikzpicture}} &  {\begin{tikzpicture}[description/.style={fill=white,inner sep=2pt}] 
    \matrix (m) [matrix of math nodes, row sep=2.5em, column sep=2.5em]
    {& &\mathscr{S}(A) \\
        & &(Y,\models,B) \\ }; 
    \path[->,font=\scriptsize]

        (m-2-3) edge node[auto,swap] {$\hat{f}(\equiv\_\circ f)$} (m-1-3);
\end{tikzpicture}}\\
\end{tabular}
\end{center}
From the construction it can be easily shown that $\xi$ and $\eta$ are natural isomorphism and hence the theorem holds.
\end{proof}
\begin{theorem}\cite{CH}\label{GEF}
$\mathbf{GA}$ is dually equivalent with category of Esakia spaces whose order structure is a forest and Esakia morphisms ($\mathbf{FESA}$).
\end{theorem}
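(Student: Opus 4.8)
The plan is to derive the dual equivalence $\mathbf{GA}\simeq\mathbf{FESA}^{\mathrm{op}}$ by \emph{restricting} the Esakia duality of \rth{ESD} to appropriate full subcategories, so that essentially all the work collapses to one object-level statement. Both $\mathbf{GA}$ (cf.\ \rdef{GA}) and $\mathbf{FESA}$ are \emph{full} subcategories of $\mathbf{HA}$ and $\mathbf{ESA}$ respectively --- their morphisms are simply all Heyting homomorphisms, respectively all Esakia morphisms, between the distinguished objects. Since the restriction of a dual equivalence to full subcategories whose object classes correspond to one another under the equivalence is again a dual equivalence, the theorem reduces to the following: a Heyting algebra $A$ satisfies prelinearity $(a\to b)\vee(b\to a)=\mathbf 1$ (\rdef{ga}) if and only if its Esakia dual $X_A$ is a forest, i.e.\ (fixing the orientation so that elements of $A$ code clopen up-sets of $X_A$) the principal up-set $\mathord{\uparrow}x$ is a chain for every $x\in X_A$.

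The heart of the proof is this object correspondence, which I would establish by transporting the Heyting implication across the duality. In the algebra of clopen up-sets, $a\to b$ is carried to $X_A\setminus\mathord{\downarrow}(\hat a\setminus\hat b)$, where $\hat a,\hat b$ denote the clopen up-sets coding $a,b$; the Esakia axiom that the down-set of a clopen set is clopen (\rdef{ES}) is exactly what keeps this set clopen, hence a genuine element of $A$. A point $w$ then lies in $\widehat{a\to b}$ precisely when $\mathord{\uparrow}w\cap\hat a\subseteq\hat b$, so the equation $\widehat{a\to b}\cup\widehat{b\to a}=X_A$ says: for all clopen up-sets $U,V$ and every $w$, either $\mathord{\uparrow}w\cap U\subseteq V$ or $\mathord{\uparrow}w\cap V\subseteq U$. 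If each $\mathord{\uparrow}w$ is a chain this is immediate, because $U\cap\mathord{\uparrow}w$ and $V\cap\mathord{\uparrow}w$ are final segments of a chain and so are comparable. Conversely, if some $\mathord{\uparrow}w$ held incomparable points $y,z$, then Priestley separation would supply clopen up-sets $U\ni y\not\ni z$ and $V\ni z\not\ni y$, whence $w$ would fall in neither $\widehat{a\to b}$ nor $\widehat{b\to a}$ and prelinearity would fail. This proves prelinear $\Leftrightarrow$ forest.

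With the object correspondence in hand I would close the argument as follows. The Esakia functors now restrict: the contravariant functor $\mathbf{HA}\to\mathbf{ESA}$ sends Gödel algebras to forest Esakia spaces, and its quasi-inverse sends a forest Esakia space $X$ to the Gödel algebra of its clopen up-sets (Gödel by the converse direction just proved), while \rth{ESD} recovers $X$ up to isomorphism from that algebra, giving essential surjectivity onto $\mathbf{FESA}^{\mathrm{op}}$. Fullness and faithfulness are inherited verbatim from \rth{ESD} since the two inclusions are full, so no separate verification on morphisms is needed.

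I expect the genuine obstacle to be the object-level lemma in the \emph{infinite} setting: the whole translation must be carried out with clopen up-sets rather than arbitrary up-sets, and producing the separating sets $U,V$ while keeping $a\to b$ inside $A$ relies essentially on compactness together with the Priestley and Esakia separation axioms. A secondary, purely bookkeeping hazard is fixing the orientation of the order consistently, so that the forest condition lands on up-sets (matching the convention of \cite{CH}) rather than on down-sets; once that is pinned down, the remainder is routine.
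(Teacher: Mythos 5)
Your proposal is correct, but note what the paper actually does here: it gives \emph{no} proof of this theorem at all --- the statement is imported verbatim from Cabrer and Priestley, which is exactly why it carries the citation \cite{CH}. What you have written is, in effect, a self-contained reconstruction of that cited result: restrict the dual equivalence of \rth{ESD} to the full subcategories $\mathbf{GA}\subseteq\mathbf{HA}$ and $\mathbf{FESA}\subseteq\mathbf{ESA}$, reduce everything to the object-level lemma ``prelinear $\Leftrightarrow$ forest'', and prove that lemma by transporting $\rightarrow$ across the duality ($w\in\widehat{a\to b}$ iff $\uparrow w\cap\hat a\subseteq\hat b$), using the fact that up-sets of a chain are nested for one direction and Priestley separation for the other. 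All of these steps are sound, including the restriction principle for full subcategories and the essential-surjectivity argument via the quasi-inverse. What your route buys is self-containedness relative to this paper's own toolkit (\rth{ESD}, \rdef{ES}, \rdef{ga}); what the citation buys is brevity and a careful treatment of conventions. One point, though, deserves more weight than the ``purely bookkeeping hazard'' label you give it. Under this paper's own orientation (\rdef{ES}: algebra elements code clopen up-sets, and $\downarrow U$ is clopen for clopen $U$), the condition dual to prelinearity is that $\uparrow x$ is a chain for every $x$, so ``forest'' in the statement must be read with roots at the top (equivalently: the order-dual of the space is a forest in the usual rooted-at-the-bottom sense). This is not cosmetic, because Esakia spaces are not closed under order reversal --- the third axiom of \rdef{ES} is not self-dual --- and with the opposite reading of ``forest'' the statement would be false in this paper's conventions: the three-element poset with one root below two incomparable leaves is a forest in the standard sense, yet its algebra of up-sets fails prelinearity, since $(\{y\}\rightarrow\{z\})\vee(\{z\}\rightarrow\{y\})=\{y,z\}\neq\mathbf{1}$. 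Your proof gets this right precisely because you fix the coding convention before proving anything, which is the one piece of care the bare citation in the paper leaves implicit.
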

From Theorem \ref{GIGA} and Theorem \ref{GEF} the following theorem holds.
\begin{theorem}
$\mathbf{GI-TopSys}$ is equivalent to $\mathbf{FESA}$.
\end{theorem}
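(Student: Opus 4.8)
The plan is to obtain the desired equivalence purely by composing the two dualities already in hand, namely \rth{GIGA} and \rth{GEF}. The conceptual point is that the composite of two contravariant (``dual'') equivalences is a covariant equivalence: the two dualizations cancel, leaving an ordinary equivalence between $\mathbf{GI-TopSys}$ and $\mathbf{FESA}$. So no new construction is required beyond reinterpreting each duality as a genuine equivalence with an opposite category and then pasting the resulting data together.

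First I would reformulate \rth{GIGA} as an honest equivalence involving an opposite category. Its proof exhibits $\mathscr{G}:\mathbf{GI-TopSys}\to\mathbf{GA^{op}}$ as left adjoint to $\mathscr{S}:\mathbf{GA^{op}}\to\mathbf{GI-TopSys}$ with unit $\eta$ and co-unit $\xi$ that are natural isomorphisms; an adjunction whose unit and co-unit are both isomorphisms is an adjoint equivalence, so we have $\mathscr{S}\mathscr{G}\cong\mathrm{id}_{\mathbf{GI-TopSys}}$ and $\mathscr{G}\mathscr{S}\cong\mathrm{id}_{\mathbf{GA^{op}}}$, i.e. an equivalence $\mathbf{GI-TopSys}\simeq\mathbf{GA^{op}}$. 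Likewise I would read \rth{GEF}: a dual equivalence between $\mathbf{GA}$ and $\mathbf{FESA}$ is exactly an equivalence $\mathbf{GA^{op}}\simeq\mathbf{FESA}$, so there are functors $E:\mathbf{GA^{op}}\to\mathbf{FESA}$ and $E':\mathbf{FESA}\to\mathbf{GA^{op}}$ with natural isomorphisms $E'E\cong\mathrm{id}_{\mathbf{GA^{op}}}$ and $EE'\cong\mathrm{id}_{\mathbf{FESA}}$.

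Next I would compose. Take the forward functor $F:=E\circ\mathscr{G}:\mathbf{GI-TopSys}\to\mathbf{FESA}$ and the backward functor $G:=\mathscr{S}\circ E':\mathbf{FESA}\to\mathbf{GI-TopSys}$. Then I would paste the two families of natural isomorphisms by whiskering: on the $\mathbf{GI-TopSys}$ side, $G\circ F=\mathscr{S}E'E\mathscr{G}\cong\mathscr{S}\mathscr{G}\cong\mathrm{id}_{\mathbf{GI-TopSys}}$, using $E'E\cong\mathrm{id}_{\mathbf{GA^{op}}}$ whiskered by $\mathscr{S}$ and $\mathscr{G}$; symmetrically, on the $\mathbf{FESA}$ side, $F\circ G=E\mathscr{G}\mathscr{S}E'\cong EE'\cong\mathrm{id}_{\mathbf{FESA}}$, using $\mathscr{G}\mathscr{S}\cong\mathrm{id}_{\mathbf{GA^{op}}}$ whiskered by $E$ and $E'$. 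These two isomorphisms exhibit $F$ as an equivalence of categories with quasi-inverse $G$, which is precisely the claim.

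The step demanding the most care is the variance bookkeeping rather than any genuine obstacle: one must track which functors land in $\mathbf{GA}$ versus $\mathbf{GA^{op}}$ so that the two contravariant passages indeed compose to a covariant one, and confirm that the whiskered transformations remain isomorphisms. The latter is automatic, since applying a functor to a natural isomorphism (whiskering) again yields a natural isomorphism. Hence the result follows directly from \rth{GIGA} and \rth{GEF}.
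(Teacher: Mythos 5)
Your proposal is correct and takes essentially the same approach as the paper: the paper's proof consists of exactly this observation, namely that the theorem follows by composing the dual equivalence of Theorem~\ref{GIGA} with that of Theorem~\ref{GEF}, so that the two contravariant passages cancel. Your write-up merely makes explicit the routine bookkeeping (quasi-inverses and whiskered natural isomorphisms) that the paper leaves implicit.
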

We can summarize our results by the following diagram.
\def\firstcircle{(3,8cm) circle (1.2cm)}
\def\secondcircle{(4,8.65) circle (1.5cm)}

\def\firscircle{(0,4cm) circle (1.2cm)}
\def\seconcircle{(0,4.65) circle (1.5cm)}

\def\fircircle{(6,4cm) circle (1.2cm)}
\def\secocircle{(8,4.65) circle (1.5cm)}
\begin{center}
\begin{tikzpicture}
\centering
    \draw \firstcircle node[name=A] {$\mathbf{GI-TopSys}$};
    \draw \firscircle node[name=C] {$\mathbf{FESA}$};
    \draw \fircircle node[name=E] {$\mathbf{GA^{op}}$};
\node[draw, thick, rounded corners, inner xsep=0.3em, inner ysep=2.9em, fit=(A)] (B) {};
\node[draw, thick, rounded corners, inner xsep=1.8em, inner ysep=3.0em, fit=(C)] (D) {};
\node[draw, thick, rounded corners, inner xsep=1.8em, inner ysep=3.0em, fit=(E)] (F) {};
    \node (GFTS) at (3,9.6) {$\mathbf{HI-TopSys}$};
    \node (GFT) at (0,2.4) {$\mathbf{ESA}$};
    \node (GFr) at (6,2.4) {$\mathbf{HA^{op}}$};
\path[<-, font=\large, >=angle 90]
(A)edge [bend right=20] node[above] {} (C);  
\path[<-, font=\large, >=angle 90]
(C)edge [bend left=20] node[above] {} (A); 
\path[<-, font=\large, >=angle 90]
(B)edge [bend right=0] node[above] {} (D);  
\path[<-, font=\large, >=angle 90]
(D)edge [bend right=0] node[above] {} (B); 
\path[<-, font=\large, >=angle 90]
(E)edge [bend left=20] node[above] {} (C);  
\path[<-, font=\large, >=angle 90]
(C)edge [bend right=20] node[above] {} (E); 
\path[<-, font=\large, >=angle 90]
(D)edge [bend right=0] node[above] {} (F);  
\path[<-, font=\large, >=angle 90]
(F)edge [bend right=0] node[above] {} (D); 
\path[<-, font=\large, >=angle 90]
(A)edge [bend left=20] node[above] {} (E);  
\path[<-, font=\large, >=angle 90]
(E)edge [bend right=20] node[above] {} (A); 
\path[<-, font=\large, >=angle 90]
(B)edge [bend right=0] node[above] {} (F);  
\path[<-, font=\large, >=angle 90]
(F)edge [bend right=0] node[above] {} (B); 
\end{tikzpicture}
\end{center}
\section{Conclusion}
This paper suggest a new approach (new view) of representation of Heyting algebra as I-topological system. Moreover, relationship between the I-topological system and Esakia space and its particular case G\"{o}del space are shown. Connection of Kripke model with proposed system is indicated. It is expected that the proposed notion will play vital roles in the field of computer science and physics.
\subsection*{Acknowledgement}
The research work of the third author was supported by Dipartimento di Matematica, Universit\'a degli Studi di Salerno, the \emph{Indo-European Research Training Network in Logic} (IERTNiL) funded by the \emph{Institute of Mathematical Sciences, Chennai}, the \emph{Institute for Logic, Language and Computation} of the \emph{Universiteit van Amsterdam} and the \emph{Fakult\"at f\"ur Mathematik, Informatik und Naturwissenschaften} of the \emph{Universit\"at Hamburg} and Department of Science $\&$ Technology, Government of India under Women Scientist Scheme (reference no. SR/WOS-A/PM-52/2018). Part of this work was done when the second and the third author was visiting Dipartimento di Matematica, Universit\`a degli Studi di Salerno and they are thankful to the department for the warm hospitality.

\begin{footnotesize}
\begin{flushleft}
Antonio Di Nola\\
\textit{Universit\`{a} degli Studi di Salerno, Salerno, Italy}\\
\textit{E-mail:} adinola@unisa.it\\
\end{flushleft}
\end{footnotesize}

\begin{footnotesize}
\begin{flushleft}
Revaz Grigolia\\
\textit{Ivane Javakhishvili Tbilisi State University, Tbilisi, Georgia}\\
\textit{E-mail:} revaz.grigolia@tsu.ge, revaz.grigolia359@gmail.com\\
\end{flushleft}
\end{footnotesize}

\begin{footnotesize}
\begin{flushleft}
Purbita Jana\\
\textit{Indian Institute of Technology (IIT), Kanpur, India}\\
\textit{E-mail:} purbita@iitk.ac.in, purbita\_presi@yahoo.co.in\\
\end{flushleft}
\end{footnotesize}
Received ........
 
Accepted ........
\end{document}